\documentclass[11pt,reqno]{article}

\usepackage{amsmath,amsthm,amssymb}

\usepackage{stmaryrd}
\usepackage{fullpage}

\usepackage{hyperref}
\usepackage{color}

\usepackage{graphicx}
\usepackage[numbers]{natbib}

\usepackage{authblk}


\newcommand{\BBE}{\mathbb{E}}
\newcommand{\BBC}{\mathbb{C}}
\newcommand{\BBR}{\mathbb{R}}

\newcommand{\fPois}{\mathfrak{p}}

\newcommand{\BBe}{\mathbf{e}}

\newcommand{\cA}{\mathcal{A}}

\newcommand{\cS}{\mathcal{S}}

\newcommand{\cP}{\mathcal{P}}
\newcommand{\cD}{\mathcal{D}}

\newcommand{\Id}{\mathbb{I}}

\newcommand{\Ave}{\mathbf{A}}
\newcommand{\Har}{\mathbf{H}}


\DeclareMathOperator{\Tr}{Tr}

\DeclareMathOperator{\diff}{d\!}


\newtheorem{theorem}{Theorem}

\newtheorem{proposition}{Proposition}
\newtheorem{lemma}{Lemma}


\theoremstyle{definition}
\newtheorem{definition}{Definition}

\newtheorem{remark}{Remark}

\title{Matrix Means and a Novel High-Dimensional Shrinkage Phenomenon}

\author[1]{Asad Lodhia}
\author[2]{Keith Levin}
\author[3]{Elizaveta Levina}
\affil[1]{Broad Institute of MIT and Harvard}
\affil[2]{Department of Statistics, University of Wisconsin-Madison}
\affil[3]{Department of Statistics, University of Michigan}
\date{}

\begin{document} 
\maketitle

\begin{abstract}
Many statistical settings call for estimating a population parameter,
most typically the population mean, based on a sample of matrices.
The most natural estimate of the population mean is the arithmetic mean,
but there are many other matrix means that may
behave differently, especially in high dimensions.
Here we consider the matrix harmonic mean as an alternative
to the arithmetic matrix mean.
We show that in certain high-dimensional regimes,
the harmonic mean yields an improvement
over the arithmetic mean in estimation error as measured by
the operator norm.
Counter-intuitively, studying the asymptotic behavior of these two matrix
means in a spiked covariance estimation problem,
we find that this improvement in operator norm error does not imply
better recovery of the leading eigenvector.
We also show that a Rao-Blackwellized version of the harmonic mean is equivalent to a linear shrinkage estimator studied previously in the high-dimensional covariance estimation literature, while applying a similar Rao-Blackwellization to regularized sample covariance matrices yields a novel nonlinear shrinkage estimator.
Simulations complement the theoretical results, illustrating the conditions
under which the harmonic matrix mean yields an empirically better estimate.
\end{abstract}

\section{Introduction} \label{sec:intro}

Matrix estimation problems arise in statistics in a number of areas,
most prominently in covariance estimation, but also in network analysis,
low-rank recovery and time series analysis.
   Typically, the focus is on estimating a matrix based on a single noisy realization of that matrix.
   For example, the problem of covariance estimation \cite{FanLiaLiu2016} focuses on estimating a population covariance matrix based on a sample of vectors, which are usually combined to form a sample covariance matrix or another estimate of the population covariance. In network analysis and matrix completion problems, the goal is typically to estimate the expectation of a matrix-valued random variable based on a single observation under suitable structural assumptions (see, e.g., \cite{Chatterjee2015} and citations therein). A related setting that has received less attention is the case where a sample of matrices is available and the goal is to estimate an underlying population mean or other parameter. This arises frequently in neuroimaging data analysis, where each matrix represents connectivity within a particular subject's brain and the goal is to estimate a population brain connectivity pattern \cite{BulSpo2009,Sporns2012}.
 
The most direct approach to estimating the underlying population mean from a sample of matrices is to take the arithmetic (sample) mean, perhaps with some regularization to ensure the stability of the resulting estimate.  The arithmetic matrix  mean is the mean with respect to Euclidean geometry on the space of matrices, which is often not the most suitable average for a given matrix model.  A simple example can be found in our recent work  \cite{LevLodLev2019}, where we showed that in the problem of estimating a low-rank expectation of a collection
of independent random matrices with different variances, a weighted average improves upon the na\"ive arithmetic matrix mean,
analogously to the scalar analogue, in which a weighted average can improve
upon the unweighted mean in the presence of heterogeneous variances.
Somewhat surprisingly in light of the rich geometry of matrices, fairly little attention has been paid in the literature to other matrix geometries and their associated means.
An exception is work by Schwartzman \cite{Schwartzman2016}, who
argued for using the intrinsic geometry of the positive definite cone
\cite{Bhatia2006} in the problem of covariance estimation,
and showed that a mean with respect to this different matrix geometry can, under certain models, yield an appreciably better estimate.   Recent work has also considered Fr\'{e}chet means in the context of multiple-network analysis \cite{LunOlhWol2019}.  Continuing in this vein, the current work aims to better understand the sampling distribution of matrix means other than the arithmetic mean under different matrix geometries.

Computing means with respect to different
geometries has been studied at some length in the signal processing and
computer vision communities, mostly in the context of the Grassmann and
Stiefel manifolds \cite{AbsMahSep2004,EdeAriSmi1998,MarBevDraKirPet2014}.
See \cite{Smith2005} for a good discussion of how taking intrinsic
geometry into account leads to estimators other than the
arithmetic mean. 
Recent work has considered similar geometric concerns
in the context of network data
\cite{KolLinRosXuWal2017,LunOlhWol2019}.
Kolaczyk and coauthors \cite{KolLinRosXuWal2017} considered the problem
of averaging multiple networks on the same number of vertices,
developed a novel geometry for this setting and derived a Fr\'{e}chet
mean for that geometry.
Recent work by Lunag\'{o}mez, Olhede and Wolfe \cite{LunOlhWol2019}
considered a similar network averaging problem,
and presented a framework for both specifying distributions of networks
and deriving corresponding sample means.
Unfortunately, most of these matrix means are not amenable to the currently available tools from random matrix theory that could help analyze their properties.

In this paper, we consider the behavior of the harmonic mean of a collection of random matrices, a matrix mean that arises from a markedly different eometry than the matrix arithmetic mean.
The harmonic mean turns out to be well-suited to analysis using techniques in random matrix theory, and it is our hope that results established here will be extended to other related matrix means in the future.
Building on random matrix results developed by the first author \cite{Lod19}, we show how the harmonic matrix mean can, in certain regimes, yield a better estimate of the population mean matrix in spectral norm compared to the arithmetic mean.
We also show that this improvement does not carry over to recovery of the top population eigenvector in a spiked covariance model, making an important distinction between two measures of estimation performance that are often assumed to behave similarly.
We characterize the settings in which the harmonic matrix mean improves upon the arithmetic matrix mean as well as the settings in which it does not, and show the implications of these results for covariance estimation. 

Our focus in this work is on estimating the population mean of a collection of Wishart matrices, which can be thought of as sample covariance matrices.
There is an extensive literature on estimating a population covariance matrix on $p$ variables from $n$ observations based on a single covariance matrix.
The sample covariance matrix is the maximum likelihood estimator for Gaussian data, and when the dimension $p$ is fixed, classical results fully describe its behavior \cite{Muirhead,Anderson}.
In the high-dimensional regime, where $p$ is allowed to grow with $n$,  the sample covariance matrix is not well-behaved, and in particular becomes singular as soon as $p \ge n$.
There has been extensive work on understanding this phenomenon in random matrix theory, starting from the pioneering work of \cite{MarPas1967} and followed by numerous more recent results, especially focusing on estimating the spectrum \cite{YaoZheBai2015, ElKaroui2008}.
Much work in random matrix theory has focused on the spiked model, in  which the population covariance is the sum of the identity matrix and a low-rank signal matrix. \cite{JohLu2009,BerRig2013,CaiMaWu2015,FanJohSun2018}.

The problem of covariance estimation is now several decades old
(see, e.g., \cite{Stein1986}, and refer to \cite{DonGavJoh2018} for a thorough overview of early work).
In the past two decades, literature on covariance estimation in high dimensions (see~\cite{FanLiaLiu2016} for a review) has focused on addressing the shortcomings of the sample covariance, mainly by applying regularization.
 James-Stein type shrinkage was considered in early 
 Bayesian approaches~\cite{DanKas2001}
and in the Ledoit-Wolf estimator~\cite{LedWol2004},
which shrinks the sample covariance matrix towards the identity matrix using
coefficients optimized with respect to a Frobenius norm loss.
Subsequent papers presented variants with different estimates of the optimal coefficients and different choices of the shrinkage target matrix 
for normal data~\cite{CheWieEldHer2010,FisSun2011},
as well as for the more general case of finite fourth
moments~\cite{Touloumis2015}.
A related line of work has focused on estimation with respect to Stein's loss
\cite{DeySri1985,Stein1986,LedWol2018}.

More recent work introduced the class of orthogonally invariant estimators
\cite{DonGavJoh2018}, which unifies many of the regularization approaches
outlined above.
Given a covariance matrix, an orthogonally invariant estimator
produces an estimate of the population covariance matrix
with eigenspace identical to that of the sample covariance,
and spectrum that is a function $\Phi$
of the spectrum of the sample covariance.
Donoho and coauthors \cite{DonGavJoh2018} showed how choices
of loss function yield different (asymptotically) optimal choices of $\Phi$.
Nonlinear eigenvalue regularization approaches in this vein have been explored
extensively \cite{ElKaroui2008,RaoMinSpeEde2008,LedWol2012,WonLimKimRaj2013,LiCheQinBaiYao2013,Lam2016,KonVal2017}.
Typically, the eigenvalues of the sample covariance matrix are adjusted
either according to the asymptotic relation between
the limiting spectral distribution of a spiked covariance
and its Stieltjes transform
or based on the method of moments applied to the limiting spectral distribution.
Approaches such as these tend to improve upon the linear shrinkage estimates
introduced in \cite{LedWol2004} by accounting for nonlinear dispersion of the sample eigenvalues with respect to their population analogues.

One shortcoming of orthogonally invariant estimators is that they do not in any way change 
the estimated eigenvectors, which are not consistent in the high-dimensional regime  \cite{JohLu2009,BGR11}.
An alternative approach that overcomes this limitation is to regularize the sample covariance matrix by imposing structural constraints.
This class of methods includes banding or tapering of the covariance matrix, suitable when the variables have a natural ordering \cite{WuPou2003,BicLev2008taper}, and thresholding when the variables are not ordered \cite{BicLev2008thresh, RotLevZhu2009,CaiLiu2011}.
Minimax rates for many structured covariance estimation
problems are now known \cite{CaiZhaZho2010,CaiZho2012sparse,CaiZho2012ell1};
see \cite{CaiRenZho2016} for an overview of these results.

The remainder of this paper is laid out as follows.
In Section~\ref{sec:setup} we establish notation and introduce the
random matrix models under study.
In Section~\ref{sec:datasplit}, we establish the
asymptotic behavior of the harmonic matrix mean under these models.
In Section~\ref{sec:raoblackwell} we compute a Rao-Blackwellized version
of the harmonic mean of two random covariance matrices, illuminating connections between the harmonic mean and a family of regularized covariance estimators.   
In Section~\ref{sec:vectorrecover}, we analyze a spiked covariance model and the behavior of the top eigenvector of the harmonic mean estimator under that model.  
Finally, Section~\ref{sec:expts} briefly presents numerical simulations
highlighting the settings in which the harmonic matrix mean does and does not have an advantage in covariance estimation. Section~\ref{sec:discussion} concludes with discussion.

\section{Problem Setup} \label{sec:setup}
We begin by establishing notation.  
We denote the identity matrix by $\Id$, with its dimension clear from context.
For a $p\times p$ matrix $M$,
$\|M\|$ denotes its operator norm and
$\|M\|_F$ denotes its Frobenius norm.
For a set $A$, let
$\mathbf{1}_A(x) = 1$ if $x \in A$ and $\mathbf{1}_A(x) = 0$ otherwise. 
We denote by $\cS_p(\BBR)$ and $\cS_p(\BBC)$ the spaces of
$p \times p$ symmetric and Hermitian positive definite matrices, respectively.
For a $p\times p$ symmetric or Hermitian matrix $M$, the eigenvalues of $M$
are denoted $\lambda_1(M) \ge \lambda_2(M) \ge \dots \ge \lambda_p(M)$
and their corresponding eigenvectors are denoted $v_1(M),\dots, v_p(M)$.
We use $\preceq$ for the positive semidefinite ordering,
so that $M_1 \preceq M_2$ if and only if $M_2 - M_1$ is positive semidefinite.

Suppose that we wish to estimate the population mean $\Sigma$ 
of a collection of $N$ independent identically distributed self-adjoint 
positive definite $p$-by-$p$ random matrices.
The most commonly used model for positive (semi)definite random
matrices is the Wishart distribution, which arises in
covariance estimation and is well-studied in the random matrix theory
literature.

\begin{definition}[Wishart Random Matrix: Real Observation Model]
\label{def:realdatmodel}
Let $X$ be a random $p \times n$ matrix with columns drawn i.i.d.\ from
a centered normal with covariance $\Sigma \in \BBR^{p \times p}$.
Then
\begin{equation*}
W = \frac{ X X^* }{ n }
\end{equation*}
is a real-valued random Wishart matrix with parameters $\Sigma$ and $n$.
\end{definition}

Many of our results are also true 
for the complex-valued version of the Wishart distribution,
which we define here for the special case of identity covariance.

\begin{definition}[Wishart Random Matrix: Complex Observation Model]
\label{def:datamodel}
Let $X$ be a random $p \times n$ matrix with 
i.i.d.\ complex standard Gaussian random entries,
i.e., entries of the form
\[
\frac{Z_1 + \sqrt{-1} Z_2}{\sqrt{2}},
\]
where $Z_1$ and $Z_2$ are independent standard real Gaussian random variables.
Then $W = X X^*/n$ is a random matrix following the complex Wishart
distribution with parameters $\Id$ and $n$.
\end{definition}

Let $\{X_i\}_{i=1}^N$ be a sequence of independent identically
distributed $p \times n$ matrices with columns drawn i.i.d.\ from
a centered normal with covariance $\Sigma$.
Then for each $i=1,2,\dots,N$,
\begin{equation}
W_i:=\frac{X_i X_i^*}{n}
\end{equation}
is the sample covariance matrix, which follows the real-valued Wishart
distribution with parameters $\Sigma$ and $n$.
The aim of covariance estimation is to recover the population covariance $\Sigma$, with estimation error most commonly measured in Frobenius norm or operator norm,
the latter of which is more relevant in some applications since, by the Davis-Kahan theorem \citep{DavKah1970}, small operator norm error implies that one can recover
the leading eigenvectors of $\Sigma$.
This is of particular interest in covariance estimation when the task at hand is principal component analysis (see Section~\ref{sec:vectorrecover}), but is also relevant in other problems when $\Sigma$ is low-rank.
For example, in the case of network analysis \citep{LevLodLev2019}, the eigenvectors of $\Sigma$ encode community structure.

Even in the modestly high-dimensional regime of $p/n \to \gamma\in(0,1)$,
estimating $\Sigma$ is more challenging.
When $\Sigma = \Id$, the spectral measure of each $W_i$ satisfies the 
Mar\v{c}enko-Pastur law with parameter $\gamma$ in the large-$n$ limit.
In fact, we have the stronger result
(see Proposition~\ref{prop:avelimit} below) that
\begin{equation*}
	\|W_i - \Id \| \to \gamma + 2\sqrt{\gamma} \quad \mathrm{a.s.}
\end{equation*}
A straightforward estimator of $\Sigma$ in this setting
is the arithmetic mean of the $N$ matrices,
\begin{equation*}
	\label{eqn:samplemean}
	\Ave := \frac{\sum_{i=1}^N W_i}{N},
\end{equation*}
which can be equivalently expressed as
\begin{equation*}
	\label{eqn:samplemean2}
	\Ave = \frac{\big[ X_1, \cdots, X_N\big] \big[X_1,\cdots, 
	X_N\big]^*}{Nn}.
\end{equation*}
The arithmetic mean is a sample covariance based on a total of $T=nN$
observations in this case, since we center by the known rather than estimated observation mean, and every covariance matrix is based on the same number of observations. 
Note that in the present work we assume that the observed data are mean-$0$,
and thus there is no need to center the observations about a sample mean.
This assumption comes with minimal loss of generality,
since the centered sample covariance matrix of a collection of normal
random variables is Wishart distributed with parameter $n-1$ in place of $n$,
which has no effect on the asymptotic analyses below.

\begin{remark}
In practical applications, there are situations where pooling observations
is not appropriate,
and the arithmetic mean may be ill-suited to estimating $\Sigma$ as a result.
For example, in resting state fMRI data,
pooling observations from different subjects
at a given brain region is infeasible,
as the response signals at a particular brain location
are not time-aligned across subjects.
Nonetheless, combining sample
covariance or correlation matrices across subjects
via some other procedure
may still be valid for estimating the population
covariance or correlation matrix.
\end{remark}

Throughout this paper, $T$ will denote the total number of observations of points in $p$-dimensional space.
The regime of interest is that in which $p/T \to \Gamma$,
and we will consider $T = nN$ where $N$ is a fixed number of matrices,
and $n$ will be tending to infinity with $p$. 
It will be convenient to define $\gamma = \lim p/n$,
which satisfies $\Gamma = \gamma/N$.
In this setting (see Proposition~\ref{prop:avelimit}),
\begin{equation*}
\|\Ave - \Id\| \to
\Gamma + 2\sqrt{\Gamma}\qquad \hbox{a.s.}
\end{equation*}
That is, the arithmetic mean is not a consistent estimator of $\Sigma$, even in the simple case where $\Sigma = \Id$.

As an alternative to the arithmetic mean $\Ave$, we can consider the matrix harmonic mean
\begin{equation}
	\Har := N \bigg(\sum_{i=1}^N W_i^{-1}\bigg)^{-1},
\end{equation}
provided that $n > p$ (so that the $W_i$ are invertible almost surely).
In past work \citep{Lod19}, the first author analyzed the behavior of
the harmonic mean of a collection of independent Wishart random matrices
in the regime $p/n \to \gamma\in(0,1)$.
While the harmonic mean is also inconsistent as an estimator for $\Sigma$,
we will see below that its operator-norm bias is, under
certain conditions, smaller than that of the arithmetic mean seen above.

\section{Improved Operator Norm Error of the Harmonic Mean}
\label{sec:datasplit}

When the $W_i$ are drawn from the same underlying population,
the harmonic mean can be a better estimate of the population mean $\Sigma$
in operator norm than the arithmetic mean \citep{Lod19}.
This improvement is best understood as a data-splitting result,in which we partition a sample of $p$-dimensional observations and compute the harmonic mean of the covariance estimates computed from each part.
This is certainly counter-intuitive, but we remind the reader that our intuitions are often wrong in the high-dimensional regime.

Let $D$  
be a set of $T$ points in $\BBR^p$   
and let $\cP$ be a partition of $D$ into $N \ge 2$ disjoint subsets $D_i$,
\begin{equation*}
\cP :=\{D_i\}_{i=1}^N \quad\hbox{such that}\quad D=\bigcup_{i=1}^N D_i.
\end{equation*}
Define the Wishart random matrix associated with each sujbset $D_i$ as
\begin{equation*}
W(D_i) :=\frac{1}{|D_i|} \sum_{x\in D_i} x x^*,
\end{equation*}
and define the arithmetic and harmonic means associated with $\cP$ as,
respectively,
\begin{equation*}
\Ave(\cP) := \frac{1}{N} \sum_{D_i \in \cP} W(D_i)
\text{ and }
\Har(\cP) := N\Big(\sum_{D_i \in \cP} W(D_i)^{-1}\Big)^{-1},
\end{equation*}
provided that $W(D_i)$ is invertible for all $i$.

If the sets making up the partition $\cP$ are all of the same size,
then $\Ave(\cP)$ is in fact simply the sample covariance of the vectors in $D$
and does not depend on $\cP$.
The convergence of the spectrum of $\Ave(\cP)$
is classical \citep{MarPas1967}. 
The statement as given here can be found in
\citep[Theorems 3.6--7 and Theorems 5.9--10]{BaiSil2010}.
\begin{proposition}[Mar\v{c}enko-Pastur law]
\label{prop:avelimit}
Suppose $D$ is a collection of $T$ i.i.d.\ $p$-dimensional real or complex
Gaussians with zero mean and covariance $\Id$, with $p$ and $T$
tending to infinity in such a way that $p/T \to \Gamma \in (0,1/2)$,
and let $\cP$ be a deterministic sequence of partitions of $D$ 
such that $|D_i|$ are equal for all $D_i \in \cP$. The spectral
measure of $\Ave(\cP)$ converges weakly almost surely to the measure
with density
\[
\frac{1}{2\pi\Gamma x} \sqrt{(S_+ -x)(x-S_-)}\mathbf{1}_{[S_-,S_+]}(x), 
\]
where
\begin{equation}
\label{eqn:mpdef}
S_\pm = \big(1 \pm \sqrt{\Gamma}\big)^2.
\end{equation}
Further, we have the convergence
\[
\|\Ave(\cP) - \Id\| \to \Gamma + 2 \sqrt{\Gamma} \quad\mathrm{a.s.}
\]
\end{proposition}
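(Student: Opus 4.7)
The plan is to first exploit the equal-partition hypothesis to reduce the claim to the classical Marčenko--Pastur law for a single sample covariance matrix. Since $|D_i| = T/N$ for every $D_i \in \cP$, the definitions give
\[
\Ave(\cP) = \frac{1}{N}\sum_{i=1}^N \frac{N}{T}\sum_{x \in D_i} xx^*
= \frac{1}{T}\sum_{x \in D} xx^*,
\]
so $\Ave(\cP)$ is simply the $p \times p$ sample covariance matrix formed from all $T$ observations and does not depend on $\cP$ at all. The proposition therefore reduces to the standard MP statement for a complex Wishart matrix with aspect ratio $p/T \to \Gamma \in (0,1/2)$, and the role of the partition disappears from the remainder of the argument.

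For the spectral-measure convergence I would use the Stieltjes transform method. Define $m_p(z) = \frac{1}{p}\Tr(\Ave(\cP) - z\Id)^{-1}$ for $z \in \BBC \setminus \BBR$. Using the Schur complement / resolvent identity applied to rank-one updates of $\Ave(\cP)$ (one column $x \in D$ at a time), together with a standard concentration estimate for quadratic forms of the form $x^* M x$ around $\Tr M / p$ for complex Gaussian $x$, one shows that $m_p(z)$ is asymptotically close to the unique solution $m(z)$ in the upper half-plane of the self-consistent equation
\[
m(z) = \frac{1}{1 - z - \Gamma - \Gamma z\, m(z)}.
\]
Stieltjes--Perron inversion of $m$ produces the density on $[S_-,S_+]$ with $S_\pm = (1\pm\sqrt{\Gamma})^2$ given in \eqref{eqn:mpdef}. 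This is exactly Theorems 3.6--3.7 of \citep{BaiSil2010}.

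For the operator-norm convergence, it suffices to prove almost sure convergence of the extreme eigenvalues to the spectral edges, namely $\lambda_1(\Ave(\cP)) \to S_+$ and $\lambda_p(\Ave(\cP)) \to S_-$. Given these, and noting that $\Gamma < 1/2 < 1$ implies $0 < S_- \le S_+$ with
\[
\max\bigl(|S_+ - 1|,\ |S_- - 1|\bigr) = \max\bigl(\Gamma + 2\sqrt{\Gamma},\ 2\sqrt{\Gamma} - \Gamma\bigr) = \Gamma + 2\sqrt{\Gamma},
\]
one concludes $\|\Ave(\cP) - \Id\| \to \Gamma + 2\sqrt{\Gamma}$ almost surely. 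The edge convergence (the ``no outliers outside $[S_-,S_+]$'' statement) is Theorems 5.9--5.10 of \citep{BaiSil2010}, proved via moment / trace estimates on high powers of $\Ave(\cP)$ combined with Borel--Cantelli.

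The only real technical difficulties are the concentration of quadratic forms needed for the Stieltjes transform step and the high-moment trace bounds needed for the edge step; both are entirely standard for complex Gaussian entries, which is precisely why the proposition is stated in the complex Wishart setting. The ``main obstacle'' is therefore not a conceptual one but the bookkeeping in these two standard arguments, which is why the proof amounts to a reduction via equal partition sizes followed by citations to \citep{MarPas1967} and \citep{BaiSil2010}.
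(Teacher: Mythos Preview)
Your proposal is correct and mirrors the paper's treatment: the paper does not give an independent proof but simply observes (as you do) that with equal-sized cells $\Ave(\cP)$ is the full-sample covariance and then cites \citep{MarPas1967} and \citep[Theorems 3.6--7 and 5.9--10]{BaiSil2010} for both the spectral-measure convergence and the edge convergence. Your sketch of the Stieltjes-transform and edge arguments is accurate extra detail, but the substance is the same reduction plus the same citations.
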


The following result
describes the limiting behaviour of
$\Har(\cP)$ under similar conditions.
\begin{proposition}
\label{prop:harmlimit}
Let $D$ be a collection of $T$ i.i.d.\ $p$-dimensional real or complex
Gaussians with zero mean and covariance $\Id$, with $p$ and $T$
tending to infinity in such a way that
\begin{equation*}
\left| \frac{p}{T} - \Gamma \right| \le \frac{ K }{ p^2 }
\end{equation*}
where $K > 0$ is a constant and $\Gamma \in (0,1/2)$.
Let $N \geq 2$ be fixed with $T$ divisible by $N$,
and let $\cP$ be a deterministic sequence of partitions of $D$ of
size $N\leq \lfloor\Gamma^{-1}\rfloor$ such that $|D_i|$ are equal
for all $D_i \in\cP$. The spectral measure of $\Har(\cP)$ converges
weakly almost surely to the measure with density
\[
\frac{1}{2\pi \Gamma x}\sqrt{(E_+ - x)(x-E_-)}\mathbf{1}_{[E_-,E_+]}(x),
\]
where
\begin{equation} \label{eqn:HMpmdef}
E_\pm := 1 -(N-2)\Gamma \pm 2\sqrt{\Gamma} \sqrt{1 - (N-1)\Gamma}.
\end{equation}
Further, we have the convergence
\[
\|\Har(\cP) - \Id\| \to (N-2)\Gamma + 2\sqrt{\Gamma}\sqrt{1 -(N-1)
  \Gamma} \quad\mathrm{a.s.}
\]
\end{proposition}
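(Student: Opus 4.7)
The plan is to reduce the problem to a limit theorem for the sum of $N$ independent inverse complex Wishart matrices, computed via free probability, followed by the transformation $\mu \mapsto N/\mu$. First, I would observe that each $W(D_i)$ is a complex Wishart matrix with parameters $(\Id, T/N)$ whose aspect ratio satisfies $p/(T/N) \to N\Gamma \in (0,1)$, since the hypothesis $N \le \lfloor \Gamma^{-1} \rfloor$ together with $\Gamma < 1/2$ forces $N\Gamma < 1$. Applying Proposition~\ref{prop:avelimit} to each individual $W(D_i)$ therefore yields the Mar\v{c}enko-Pastur limit with parameter $N\Gamma$; in particular, $\lambda_p(W(D_i)) \to (1-\sqrt{N\Gamma})^2 > 0$ almost surely, so each $W(D_i)^{-1}$ is defined with uniformly bounded operator norm, and its limiting spectral distribution is the pushforward of the Mar\v{c}enko-Pastur law under $x \mapsto 1/x$.

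The second ingredient is asymptotic freeness. Because the $W(D_i)$ are independent and unitarily invariant, Voiculescu's asymptotic freeness theorem implies that the family $\{W(D_i)^{-1}\}_{i=1}^N$ is asymptotically free. The limiting spectral distribution of $\sum_{i=1}^N W(D_i)^{-1}$ is then the $N$-fold free additive convolution of the inverse Mar\v{c}enko-Pastur law with parameter $N\Gamma$. I would carry this out with the $R$-transform: the $R$-transform of the inverse Mar\v{c}enko-Pastur law has a closed algebraic form, so the $R$-transform of the sum is simply $N$ times that, and inverting back to the Stieltjes transform yields a quadratic equation whose Stieltjes inversion produces a density of square-root type on an interval. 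The density and support claimed for $\Har(\cP) = N\bigl(\sum_i W(D_i)^{-1}\bigr)^{-1}$ are then obtained by pushing forward under $\mu \mapsto N/\mu$, and a short algebraic calculation matches the endpoints with the $E_\pm$ stated in the proposition. As a sanity check, setting $N=1$ gives $E_\pm = (1\pm\sqrt{\Gamma})^2$, recovering Proposition~\ref{prop:avelimit}.

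The main obstacle is the operator norm statement. Bulk convergence of the spectral measure does not by itself preclude outliers, so identifying $\|\Har(\cP) - \Id\|$ with $E_+ - 1$ requires genuine edge rigidity for the harmonic mean. This is precisely where the restrictive hypothesis $|p/T - \Gamma| \le K/p^2$ enters: it enables sharp local laws and fluctuation control for the Stieltjes transform at the spectral edge. For this step I would appeal directly to the edge results of the first author in \cite{Lod19}, where the harmonic mean of independent Wisharts is analyzed in exactly this regime. Specializing the main result of \cite{Lod19} to $\Sigma = \Id$ yields both the absence of outlier eigenvalues and the identification of the largest eigenvalue with $E_+$, while the free-probability computation above identifies the limiting bulk law. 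Combining the two gives the two conclusions of the proposition.
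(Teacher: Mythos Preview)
Your approach is correct and is essentially the same as the paper's: the paper does not give an independent argument but simply observes that, for an equal-size partition, the $W(D_i)$ are $N$ i.i.d.\ complex Wisharts with aspect ratio $p/n\to N\Gamma$, and then invokes \cite[Theorem~2.1]{Lod19} directly. Your sketch recapitulates the mechanism behind that citation (asymptotic freeness of the unitarily invariant $W(D_i)^{-1}$, $R$-transform computation for the $N$-fold free additive convolution of inverse Mar\v{c}enko--Pastur, pushforward under $\mu\mapsto N/\mu$, and edge control from \cite{Lod19} using the rate hypothesis $|p/T-\Gamma|\le K/p^2$), so the two are aligned.

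One minor slip: for general $N\ge 2$ the operator-norm limit is not $E_+-1$ but $\max(E_+-1,\,1-E_-)=1-E_-=(N-2)\Gamma+2\sqrt{\Gamma}\sqrt{1-(N-1)\Gamma}$, since the spectrum of $\Har(\cP)$ is shifted below $1$ when $N>2$ (indeed $E_+<1$ is possible). Thus the edge rigidity you need is at the \emph{lower} edge $E_-$ as well as at $E_+$, which is still what \cite{Lod19} supplies.
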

\begin{proof}
For the complex Gaussian case, the above result  is a restatement of
\citep[Theorem 2.1]{Lod19},
 since if the $D_1,D_2,\dots,D_N$ are disjoint and
equal sized, then $W(D_i)$ are a collection of $N$ growing Wishart
matrices of the same dimension $p$ and shared parameter $n$,
with $p/n \to \gamma = N\Gamma$. As discussed in \citep[Remark 3]{Lod19}, the extension of
this result to the real Gaussian setting requires the strong asymptotic freeness of real Wishart random matrices, which was established in recent work \citep{FSW20}.
Details are supplied in Appendix~\ref{sec:strongfree}.
\end{proof}
We remark that the case where $|D_i|$ are permitted to vary in $i$,
while still feasibly handled by the tools of the paper \citep{Lod19}, is more complicated.
The limiting spectrum of the harmonic mean in this setting depends on the roots of a high-degree polynomial, whence comparison of the harmonic and arithmetic mean requires a more subtle analysis.
In contrast, when the cells of the partition are of equal size, the limiting spectral measure of the harmonic mean is characterized by the roots of a quadratic and thus
admits an explicit solution.
Thus, for the sake of concreteness and simplicity, we will assume that $\cP$ is a partition with cells of equal size for the remainder of the paper.
With the interpretation of $\Har(\cP)$ as a mean formed by splitting $D$ into equal parts, we have the following Theorem.

\begin{theorem}
\label{thm:datasplit}
Under the assumptions of Proposition~\ref{prop:harmlimit}, the operator norm
$\|\Har(\cP) - \Id\|$ is minimized for a partition $\cP$ of size $N=2$.
Further, for such a partition,
\[
\lim_{p,T \to \infty} \|\Har(\cP) - \Id\| =
2\sqrt{\Gamma}\sqrt{1-\Gamma} < \lim_{p,T\to\infty}\|\Ave(\cP) - \Id\|
= \Gamma + 2 \sqrt{\Gamma}.
\]
\end{theorem}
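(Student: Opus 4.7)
The plan is to reduce the theorem to a one-variable calculus problem on the function
\[
f(N) := (N-2)\Gamma + 2\sqrt{\Gamma}\sqrt{1-(N-1)\Gamma},
\]
which by Proposition~\ref{prop:harmlimit} equals the almost-sure limit of $\|\Har(\cP) - \Id\|$. The admissible integer values of $N$ are $\{2, 3, \dots, \lfloor \Gamma^{-1}\rfloor\}$, and since $\Gamma \in (0, 1/2)$ this set is nonempty with $\lfloor \Gamma^{-1}\rfloor \ge 2$.

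First I would treat $N$ as a continuous variable on $[2, \Gamma^{-1}]$ and compute
\[
f'(N) = \Gamma - \frac{\Gamma^{3/2}}{\sqrt{1-(N-1)\Gamma}}, \qquad f''(N) = -\frac{\Gamma^{5/2}}{2\,(1-(N-1)\Gamma)^{3/2}} < 0,
\]
so $f$ is strictly concave on this interval. The unique critical point occurs where $\sqrt{1-(N-1)\Gamma} = \sqrt{\Gamma}$, i.e.\ at $N = 1/\Gamma$, which is the maximum of $f$ by concavity. Evaluating at the left endpoint, $f'(2) = \Gamma\bigl(1 - \sqrt{\Gamma/(1-\Gamma)}\bigr) > 0$ precisely because $\Gamma < 1/2$ forces $\Gamma/(1-\Gamma) < 1$. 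Combined with concavity, this shows $f$ is strictly increasing on $[2, \Gamma^{-1}]$, so its minimum over the integer range $\{2, \dots, \lfloor \Gamma^{-1}\rfloor\}$ is attained at $N = 2$.

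Next, plugging in $N = 2$:
\[
f(2) = 0 + 2\sqrt{\Gamma}\sqrt{1-\Gamma} = 2\sqrt{\Gamma(1-\Gamma)},
\]
which is the stated limit. Finally, to compare with the arithmetic mean, I would just note that $\sqrt{1-\Gamma} < 1$ for $\Gamma > 0$, hence
\[
2\sqrt{\Gamma(1-\Gamma)} < 2\sqrt{\Gamma} < \Gamma + 2\sqrt{\Gamma},
\]
the last limit being supplied by Proposition~\ref{prop:avelimit}.

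There is essentially no obstacle here: the heavy lifting has already been done in Proposition~\ref{prop:harmlimit} to identify the limiting operator norm as the explicit function $f(N)$. The only subtlety worth flagging is the role of the hypothesis $\Gamma < 1/2$, which is what guarantees $f'(2) > 0$ so that the monotonicity argument places the minimum at the smallest allowable $N$; at $\Gamma = 1/2$ the inequality $f(2) < 1$ would degenerate to equality, and for $\Gamma > 1/2$ the whole setup (Proposition~\ref{prop:avelimit}, restricted spectrum of $W_i$) would break down anyway.
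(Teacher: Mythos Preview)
Your proof is correct and follows essentially the same approach as the paper: both reduce to showing that the explicit limit $f(N)=(N-2)\Gamma+2\sqrt{\Gamma}\sqrt{1-(N-1)\Gamma}$ is increasing on the admissible range, with the hypothesis $\Gamma<1/2$ ensuring $N=2$ lies to the left of the critical point. Your concavity argument is a slight variant of the paper's direct solution of $f'(x)>0$, and you additionally spell out the elementary inequality $2\sqrt{\Gamma(1-\Gamma)}<\Gamma+2\sqrt{\Gamma}$, which the paper leaves implicit.
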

\begin{proof}
The function
\[
f(x) = (x-2)\sqrt{\Gamma} + 2\sqrt{\Gamma}\sqrt{1-(x-1)\Gamma}, \qquad
x < 1 +\frac{1}{\Gamma}
\]
has derivative
\[
f'(x) = \sqrt{\Gamma} - \frac{\Gamma\sqrt{\Gamma}}{\sqrt{1 +
    (x-1)\Gamma}}
\]
which is greater than 0 whenever
\[
x > 1 + \Gamma - \frac{1}{\Gamma}.
\]
For $\Gamma \in (0, 1/2)$, this region includes the point $x=2$ so
that the minimizer of $f(x)$ on the interval $[2,\infty)$ is $2$.
\end{proof}

We note that for $N=2$,
$E_+ = 1 + 2\sqrt{\Gamma} \sqrt{1 -\Gamma} < S_+$,
so that $E_+$ is closer to $1$.
That is,
at least in the case where the true covariance matrix is the identity,
the harmonic mean is shrunk toward the true population covariance
when compared with the arithmetic mean.
The above result
suggests that given a collection $D$ of $T \geq 2p$ observations,
it is better asymptotically (as measured in operator norm error)
to estimate the covariance by splitting $D$
into two equal parts $D_1$ and $D_2$ and computing the harmonic mean of
$W(D_1)$ and $W(D_2)$ than it is to
directly compute the sample covariance matrix of $D$.
The requirement that the vectors have identity covariance is partially 
addressed by \citep[Corollary 2.1.1]{Lod19}, which we restate here.

\begin{proposition} \label{prop:harmlimit:general}
  Under the same assumptions as Proposition~\ref{prop:harmlimit}, let $N=2$
  and suppose $\Sigma$ is a positive definite matrix such that
  \[
  \limsup_{p,T\to\infty}\frac{\|\Sigma\|\|\Sigma^{-1}\|\|\Har(\cP) -
    \Id\|}{\|\Ave(\cP) - \Id\|} < 1 \text{ a.s. }
  \]
  Then
  \[
  \limsup_{p,T\to\infty}\frac{\|\sqrt{\Sigma}\Har(\cP)\sqrt{\Sigma} - \Sigma\|}
  {\|\sqrt{\Sigma}\Ave(\cP)\sqrt{\Sigma} - \Sigma\|} < 1 \text{ a.s. }
  \]
\end{proposition}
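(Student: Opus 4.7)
The plan is to exploit the simple algebraic identity
\[
\sqrt{\Sigma} A \sqrt{\Sigma} - \Sigma = \sqrt{\Sigma}(A - \Id)\sqrt{\Sigma}
\]
which holds for any $p\times p$ matrix $A$ and any positive definite $\Sigma$. Applying this with $A = \Har(\cP)$ and with $A = \Ave(\cP)$ reduces the problem to controlling operator norms of conjugations by $\sqrt{\Sigma}$ in two directions: we want an upper bound in the numerator and a lower bound in the denominator.

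First I would handle the numerator. Submultiplicativity of the operator norm, together with $\|\sqrt{\Sigma}\|^2 = \|\Sigma\|$, gives
\[
\bigl\|\sqrt{\Sigma}\Har(\cP)\sqrt{\Sigma} - \Sigma\bigr\|
= \bigl\|\sqrt{\Sigma}(\Har(\cP) - \Id)\sqrt{\Sigma}\bigr\|
\le \|\Sigma\|\,\|\Har(\cP) - \Id\|.
\]
For the denominator, I would apply the same identity in reverse: writing $\Ave(\cP) - \Id = \sqrt{\Sigma}^{-1}(\sqrt{\Sigma}\Ave(\cP)\sqrt{\Sigma} - \Sigma)\sqrt{\Sigma}^{-1}$ and again using submultiplicativity yields
\[
\|\Ave(\cP) - \Id\| \le \|\Sigma^{-1}\|\,\bigl\|\sqrt{\Sigma}\Ave(\cP)\sqrt{\Sigma} - \Sigma\bigr\|,
\]
equivalently
\[
\bigl\|\sqrt{\Sigma}\Ave(\cP)\sqrt{\Sigma} - \Sigma\bigr\| \ge \|\Sigma^{-1}\|^{-1}\,\|\Ave(\cP) - \Id\|.
\]

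Dividing these two inequalities, the ratio in the conclusion is bounded above by
\[
\frac{\|\Sigma\|\,\|\Sigma^{-1}\|\,\|\Har(\cP) - \Id\|}{\|\Ave(\cP) - \Id\|},
\]
and taking $\limsup$ on both sides and invoking the hypothesis of the proposition immediately gives the stated conclusion, almost surely. There is no serious obstacle: the proof is essentially a two-line norm manipulation built on the conjugation identity, and the random matrix content sits entirely inside the hypothesis (which in turn rests on Proposition~\ref{prop:harmlimit} and Theorem~\ref{thm:datasplit}). The only point that deserves care is making sure $\Sigma$ and $\Sigma^{-1}$ are treated as deterministic (so that $\|\Sigma\|\|\Sigma^{-1}\|$ passes through the $\limsup$), which is built into the hypothesis by having the product $\|\Sigma\|\|\Sigma^{-1}\|\|\Har(\cP) - \Id\|/\|\Ave(\cP) - \Id\|$ be assumed bounded strictly below $1$ in $\limsup$.
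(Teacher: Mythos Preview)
Your proof is correct. The paper does not actually prove this proposition in-text but merely restates it from \cite[Corollary 2.1.1]{Lod19}; the conjugation identity plus two applications of submultiplicativity (one upward on the numerator, one ``inverted'' on the denominator) is exactly the natural argument and matches what one expects the cited corollary to contain.
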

Since multiplying $\Har(\cP)$ by $\sqrt{\Sigma}$ on both sides gives
a Wishart model with population covariance $\Sigma$
(see Remark~\ref{rem:multspik} below),
the bound in Proposition~\ref{prop:harmlimit:general}
holds so long as the condition number of $\Sigma$ lies in a certain range.
For example, when $\Gamma = 1/4$, Proposition~\ref{prop:harmlimit:general} requires that the condition number be (asymptotically) smaller than $5/2\sqrt{3} \approx 1.44$ (see Remark 2 in \cite{Lod19} for further details).
Thus, in a certain sense, Proposition~\ref{prop:harmlimit:general} applies in the setting that is the opposite of most results on the spiked covariance model.
Namely, the harmonic mean is best suited to the case where the signal is spread over many eigenvectors, with the extreme case of this being the setting where $\Sigma = \Id$.

This leaves open the question of whether or not it is reasonable to assume that such a bound holds, given real data.
One heuristic for checking this assumption is to simply examine the spectra of $\Har(\cP)$ and $\Ave(\cP)$, but of course this runs into a circular problem, in which one must appeal to concentration of eigenvalues in order to motivate a result on operator-norm concentration.
Ultimately, the decision as to whether or not the condition number bound required by Proposition~\ref{prop:harmlimit:general} is reasonable lies with the practitioner.
Nonetheless, an appealing approach would be to develop a method for estimating the population condition number rather than the full spectrum.
This would in turn give a good indication of which of the arithmetic or harmonic matrix means are better suited to the observed data.
We leave the further exploration of this line to future work.

Beyond the above operator norm results, the harmonic mean has an interesting additional property with respect to the Frobenius norm.
The arithmetic matrix mean is usually motivated as minimizing the squared Frobenius norm error.
A similar objective motivates many existing shrinkage estimators for the covariance matrix \citep{LedWol2004,FisSun2011}.
Under the setting considered above, the harmonic matrix mean, despite not being optimized for this loss, {\em matches} the Frobenius norm error of the arithmetic mean asymptotically.

\begin{lemma}
\label{eqn:lpconv}
Under the conditions of Proposition~\ref{prop:harmlimit}, when $N=2$
we have
\[
\lim_{p,T\to\infty}\frac{1}{p}\|\Har(\cP) - \Id\|_F^{2} =
\lim_{p,T\to\infty}\frac{1}{p}\|\Ave(\cP) - \Id\|_F^{2} =\Gamma \ \text{a.s.} 
\]
\end{lemma}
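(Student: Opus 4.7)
The plan is to express both Frobenius norm errors as integrals of $(x-1)^2$ against the respective empirical spectral measures and pass to the almost-sure limit using the spectral results already established. For any self-adjoint $p \times p$ matrix $M$ with eigenvalues $\lambda_i(M)$, one has
\[
\frac{1}{p}\|M - \Id\|_F^2 = \frac{1}{p}\sum_{i=1}^p (\lambda_i(M) - 1)^2 = \int (x-1)^2 \, d\mu_M(x),
\]
where $\mu_M$ is the empirical spectral measure of $M$. Propositions~\ref{prop:avelimit} and \ref{prop:harmlimit} supply weak almost-sure convergence of $\mu_{\Ave(\cP)}$ and $\mu_{\Har(\cP)}$ to explicit limiting measures as well as almost-sure bounds on $\|\Ave(\cP)\|$ and $\|\Har(\cP)\|$. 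Because both matrices are positive semidefinite, these bounds confine the empirical supports to a common compact interval $[0, L]$ eventually almost surely, so weak convergence of measures lifts to convergence of integrals of the continuous function $(x-1)^2$.

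For the arithmetic mean, I would simply invoke the well-known first two moments of the Marchenko-Pastur law with parameter $\Gamma$, namely $\int x\,d\mu = 1$ and $\int x^2\,d\mu = 1 + \Gamma$, to conclude that $\int(x-1)^2\,d\mu = \Gamma$ immediately. For the harmonic mean with $N=2$, the limiting density is supported on $[1-R, 1+R]$ with $R = 2\sqrt{\Gamma(1-\Gamma)}$. The substitution $y = x - 1$ recasts the integral as
\[
\frac{1}{2\pi\Gamma}\int_{-R}^{R}\frac{y^2\sqrt{R^2-y^2}}{1+y}\,dy,
\]
and the partial fraction identity $y^2/(1+y) = y - 1 + 1/(1+y)$ splits this into three contributions: an antisymmetric piece that vanishes, a semicircle-area term equal to $-\pi R^2/2$, and the residual integral $\int_{-R}^R \sqrt{R^2-y^2}/(1+y)\,dy$. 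The latter is a classical Cauchy-type integral equal to $\pi(1-\sqrt{1-R^2})$, which simplifies to $2\pi\Gamma$ via $1 - R^2 = (1-2\Gamma)^2$ (valid because $\Gamma < 1/2$). Summing yields $2\pi\Gamma^2$, and division by $2\pi\Gamma$ produces the asserted limit $\Gamma$, matching the arithmetic mean case.

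I expect the only non-routine step to be the evaluation of $\int_{-R}^R \sqrt{R^2-y^2}/(1+y)\,dy$. This can be dispatched by a Weierstrass half-angle substitution that reduces it to an elementary arctangent integral, or equivalently by a contour integration exploiting that this is the Cauchy transform of the semicircle density evaluated on the real axis just outside its support. All remaining pieces — the identification of Frobenius norm with a spectral integral, the upgrade from weak convergence to integral convergence under the uniform compact-support guarantee, and the moment computation for Marchenko-Pastur — are either routine or supplied directly by the cited propositions.
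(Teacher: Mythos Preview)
Your proposal is correct and follows essentially the same route as the paper: both express the normalized Frobenius error as $\int (x-1)^2$ against the limiting spectral density and then evaluate. The only difference is mechanical---the paper invokes an appendix lemma giving closed forms for the zeroth, first, and second moments of densities of the shape $\frac{1}{2\pi\Gamma x}\sqrt{(b-x)(x-a)}$, whereas you compute the harmonic-mean integral directly via the substitution $y=x-1$ and the partial-fraction identity $y^2/(1+y)=y-1+1/(1+y)$; your explicit justification for upgrading weak convergence to convergence of the integral (via the operator-norm bound confining the empirical support) is a point the paper leaves implicit.
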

\begin{proof}
Since $\Har(\cP) -\Id$ is symmetric,
by the almost sure weak convergence of $\Har(\cP)$,
it suffices to show
\[
\lim_{p,T\to\infty}\frac{1}{p}\Tr\big[(\Har(\cP) - \Id)^{2}\big]
\to \int_{E_-}^{E_+}(x-1)^{2}\frac{\sqrt{(E_+
    - x)(x - E_-)}}{2\pi \Gamma x}\diff x,
\]
where $E_{\pm}$ are defined in Equation~\eqref{eqn:HMpmdef},
and compare with
\[
\lim_{p,T\to\infty}\frac{1}{p}\Tr\big[(\Ave(\cP) - \Id)^{2}\big]
\to \int_{S_-}^{S_+}(x-1)^{2}\frac{\sqrt{(S_+- x)(x - S_-)}}{2\pi \Gamma x}
\diff x,
\]
where $S_{\pm}$ are defined in Equation~\eqref{eqn:mpdef}.
Note that
\[
\frac{E_+ - E_-}{E_++E_-} = 2\sqrt{\Gamma}\sqrt{1- \Gamma},\qquad
\frac{E_+ + E_-}{2} = 1
\]
and
\[
\frac{S_+-S_-}{S_++S_-}=\frac{2\sqrt{\Gamma}}{1+\Gamma},
  \qquad \frac{S_++S_-}{2} = 1 + \Gamma.
\]
Using Lemma~\ref{lem:integral} in the Appendix,
\[
\int_{E_-}^{E_+} (x^2 - 2x + 1)\frac{ \sqrt{(E_+-x)(x-E_-)}}{2\pi
  \Gamma x}\diff x = 1-\Gamma -2(1-\Gamma) + 1
=\Gamma,
\]
while
\[
\int_{S_-}^{S_+} (x^2 - 2x + 1)\frac{ \sqrt{(S_+-x)(x-S_-)}}{2\pi \Gamma x}\diff x
=1+\Gamma -2 + 1= \Gamma.\qedhere
\]
\end{proof}

\section{Rao-Blackwell Improvement of the Harmonic Mean}
\label{sec:raoblackwell}

The results in Section~\ref{sec:datasplit} are somewhat unexpected,
and raise the question of whether other matrix means have similar properties.
Analyzing other matrix means such as the geometric mean or more complicated
Frech\'{e}t means \citep{Bhatia2006,Schwartzman2016} under the high-dimensional regime poses a significant challenge since these operations currently fall outside the scope of known results in free probability theory.
Random matrix techniques do, however, allow us to extend our analysis
of the harmonic mean by computing its expectation conditioned on the
arithmetic mean.
By the Rao-Blackwell Theorem, using this conditional expectation as an estimator yields
an expected spectral norm error no worse than the unconditioned harmonic mean.

In this section we restrict ourselves to the model of Definition~\ref{def:realdatmodel}, so as to ensure the availability of explicit integrals for our quantities of interest.
We expect the same results can be established for the complex model in Definition~\ref{def:datamodel}, but doing so would require reworking the results of \citep{Kon88} (restated in the Appendix for ease of reference) for the complex Wishart ensemble, which is outside the scope of the present article.
Further, we restrict our attention to $T = 2n \ge p$ to facilitate comparison with the $N=2$ case studied in the previous section.
To this end, let $\cP = D_1 \cup D_2$ where $D_1$ and $D_2$ are disjoint, and 
\begin{align*}
W_1 &:= W(D_1)= \frac{1}{|D_1|}\sum_{x \in D_1} xx^*  = \frac{1}{n}\sum_{x\in D_1} xx^*,\\
W_2 &:= W(D_2) = \frac{1}{|D_2|} \sum_{x \in D_2} xx^* = \frac{1}{n}\sum_{x\in D_2} xx^*.
\end{align*}
The matrices $W_1$ and $W_2$ have densities \citep[Theorem
  7.2.2]{Anderson}
\begin{equation*} 
f_{W_i}(w_i) =C_{n,p}\det(w_i)^{\frac{1}{2}(n-p-1)}\exp\bigg(-\frac{n}{2}\Tr
\Sigma^{-1}w_i\bigg),
\end{equation*}
where 
\begin{equation*}
C_{n,p} =\frac{n^{\frac{pn}{2}}}{2^{\frac{1}{2}np}
\det(\Sigma)^{\frac{n}{2}}\Gamma_p\big(\frac{n}{2}\big)}, \  \Gamma_p(x) = \pi^{\frac{p(p-1)}{4}}\prod_{i=1}^p\Gamma\bigg(x
-\frac{i-1}{2}\bigg).
\end{equation*}
These densities are supported on the space $\cS_p(\BBR)$
of $p\times p$ symmetric positive definite real matrices.   As before, define
\begin{equation*} \begin{aligned}
\Ave &= \frac{W_1 + W_2}{2}, \\ 
\Har &= 2\big(W_1^{-1}+W_2^{-1}\big)^{-1} 
\end{aligned} \end{equation*}
and note that
\[
\Ave = \frac{1}{2n}\sum_{x \in D} x x^*
\]
is a Wishart random matrix with parameter $\Sigma$ and $2n$, and thus
\begin{equation}
\label{eqn:arithdens}
f_{\Ave}(a) := C_{2n,p}\det(a)^{\frac{1}{2}(2n-p-1)}\exp(-n\Tr
\Sigma^{-1}a),
\end{equation}
where $a$ takes values in all of $\cS_p(\BBR)$.

Recall that the matrix $\Ave$ is a sufficient statistic for the
covariance matrix $\Sigma$, and note that the loss function
\[
\ell(M,\Sigma) := \|M -\Sigma\|,
\]
is convex in the variable $M$. By
the Rao-Blackwell Theorem \cite[5a.2 (ii)]{Rao73}, we have 
\[
\BBE ~\ell\big(\mathbb{E}[\Har | \Ave], \Sigma \big)
\leq \BBE ~\ell\big(\Har,\Sigma\big) ,
\]
which is to say that as an estimator, $\Har$ is outperformed by the conditional
expectation $\BBE[\Har|\Ave]$, which we now compute.

Observe that the harmonic mean satisfies \cite[Section 4.1]{Bhatia2006}
\[
\begin{split}
  \Har &= 2W_1 - W_1 \Ave^{-1} W_1,\\
  \Har &= 2W_2 - W_2 \Ave^{-1} W_2. 
\end{split}
\]
Averaging these two equations gives 
\[
\Har = 2\Ave - \frac{1}{2} W_1 \Ave^{-1}W_1 -
\frac{1}{2}W_2\Ave^{-1}W_2,
\]
and taking the conditional expectation yields
\[
\BBE[\Har|\Ave] = 2\Ave -
\frac{1}{2}\BBE[W_1\Ave^{-1}W_1|\Ave] -
\frac{1}{2}\BBE[W_2 \Ave^{-1}W_2|\Ave].
\]
To compute the matrix-valued integrals
\[
\BBE[W_1 \Ave^{-1}W_1|\Ave] \quad\mathrm{and}\quad
\BBE[W_2\Ave^{-1}W_2 | \Ave],
\]
we proceed by directly computing the conditional density of $W_i$
given $\Ave$.

We begin with the joint density of $W_1$ and $W_2$:
\begin{equation*}
f_{W_1}(w_1) f_{W_2}(w_2) = C_{n,p}^2
\det(w_1)^{\frac{1}{2}(n-p-1)}\det(w_2)^{\frac{1}{2}(n-p-1)}
\exp\bigg[-\frac{n}{2} \Tr \big\{\Sigma^{-1}(w_1 + w_2)\big\} \bigg].
\end{equation*}
We will use this formula to obtain an expression for the joint
density of $W_1$ and $\Ave$.
For a symmetric matrix $M$ with entries $m_{i,j}$,
let $\diff m_{i,j}$ denote Lebesgue measure over that entry and define
\[
(\diff M) := \bigwedge_{1 \leq i \leq j \leq p} \diff m_{i,j},
\]
that is $(\diff M)$ is the volume form of the matrix $M$. The
``shear'' transformation
\begin{equation*}
(w_1, w_2) \mapsto (w_1, a),~~~a:= \frac{w_1 + w_2}{2},
\end{equation*}
maps the domain $\cS_p(\BBR) \times \cS_p(\BBR)$ to the region
\begin{equation*}
\{M \in \cS_p(\BBR): 0 \preceq M \preceq 2a\} \times \cS_p(\BBR),
\end{equation*}
where we remind the reader that $\preceq$
denotes the positive semidefinite ordering.
The Jacobian of this mapping is
\begin{equation*}
(\diff w_1) \wedge (\diff w_2) = 2^{\frac{p(p+1)}{2}} (\diff w_1) \wedge (\diff a) , 
\end{equation*}
hence the joint density is
\begin{equation*}
f_{W_1,\Ave}(w_1,a) =
C^2_{n,p} 2^{\frac{p(p+1)}{2}}
\det(w_1)^{\frac{1}{2}(n-p-1)}\det(2a - w_1)^{\frac{1}{2}(n-p-1)}
\exp[-n\Tr\Sigma^{-1}a].
\end{equation*}
To obtain the conditional distribution, we divide by
\eqref{eqn:arithdens}, yielding
\begin{equation}
  \label{eqn:matrixbeta}
f_{W_1|\Ave}(w_1|a) = \frac{C^2_{n,p}}{C_{2n,p}}
\frac{\det(w_1)^{\frac{n-p-1}{2}}\det(2a-w_1)^{\frac{n-p-1}{2}}}{\det(2a)^{n-\frac{p+1}{2}}},
\end{equation}
where $w_1$ is supported on the region
\[
\cD(a) := \{m \in \cS_p(\BBR) : 0 \preceq m \preceq 2a\}.
\]
Evaluating this density at $a = \Ave$, for $w_1 \in \cD(\Ave)$ yields
\begin{equation*}
f_{W_1|\Ave}(w_1|\Ave) := \frac{C_{n,p}^2}{C_{2n,p}}
\frac{\det(w_1)^{\frac{n-p-1}{2}}\det(2\Ave-w_1)^{\frac{n-p-1}{2}}}{\det(2\Ave)^{\frac{2n-p-1}{2}}},
\end{equation*}
a multivariate Beta distribution $B(p;n,n;2\Ave)$ (see Definition~\ref{def:multbeta} in the Appendix).
With this notation, we have
\begin{equation*}
\BBE[W_1 \Ave^{-1} W_1| \Ave] = \int_{\cD(\Ave)} w_1 \Ave^{-1} w_1\, f_{W_1|\Ave}(w_1|\Ave) (\diff w_1)
\end{equation*}
the integration over $w_1$ can be done using Theorem~\ref{thm:expofsquare}
in the Appendix, with $n_1=n$, $n_2=n$,
and setting $\Delta = 2\Ave$ yields the following Lemma.

\begin{lemma}\label{lem:condexp}
For any $F$ that is a function of $\Ave$ taking value in the space of
$p\times p$ matrices,
\begin{equation*}
\BBE[W_1FW_1|\Ave] = \frac{\{n(2n+1) - 2\}\Ave F \Ave + n\{(\Ave F\Ave)^\top
  + \Tr(\Ave F)\Ave\}}{(2n-1)(n+1)}.
\end{equation*}
\end{lemma}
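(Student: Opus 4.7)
The plan is to exploit the explicit conditional distribution of $W_1$ given $\Ave$ that was just derived. By Equation~\eqref{eqn:matrixbeta}, conditionally on $\Ave$ the matrix $W_1$ follows the multivariate Beta distribution $B(p;n,n;2\Ave)$ supported on the matrix interval $\cD(\Ave)=\{m\in\cS_p(\BBR):0\preceq m\preceq 2\Ave\}$. Since by hypothesis $F$ is a function of $\Ave$ alone, conditioning on $\Ave$ renders $F$ a deterministic matrix, and the conditional expectation reduces to a matrix-valued integral of $w_1 F w_1$ against this matrix Beta density.

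First I would write
\[
\BBE[W_1 F W_1\mid \Ave] = \int_{\cD(\Ave)} w_1 F w_1 \, f_{W_1|\Ave}(w_1 \mid \Ave)\,(\diff w_1),
\]
and recognize this as exactly the integral to which Theorem~\ref{thm:expofsquare} of the Appendix (Konno's 1988 result) applies, with parameters $n_1=n$, $n_2=n$ and scale matrix $\Delta=2\Ave$. Invoking the theorem yields a closed form expressing the integral as a linear combination of the three natural tensor contractions $\Delta F\Delta$, $(\Delta F\Delta)^\top$, and $\Tr(\Delta F)\,\Delta$, with coefficients that are rational in $n_1,n_2,p$ but simplify considerably when $n_1=n_2$.

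Second, I would substitute $\Delta=2\Ave$, so that each of the three matrix structures accumulates a factor of $4$ from the two copies of $\Delta$ (in the trace term, one factor from the argument of the trace and one from the outer $\Delta$). Absorbing these factors of $4$ into the coefficients from Konno's formula and specializing to $n_1=n_2=n$ produces a common denominator of $(2n-1)(n+1)$ and numerators $n(2n+1)-2$ for $\Ave F\Ave$ and $n$ for each of $(\Ave F\Ave)^\top$ and $\Tr(\Ave F)\Ave$, matching the statement exactly.

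The only real obstacle is careful algebraic bookkeeping in specializing Theorem~\ref{thm:expofsquare}: one must track precisely which of its terms contributes to each of $\Ave F\Ave$, $(\Ave F\Ave)^\top$, and $\Tr(\Ave F)\Ave$ and verify that, once the $n_1=n_2=n$ simplifications are performed, the dependence on $p$ drops out and the common denominator collapses to $(2n-1)(n+1)$. No additional probabilistic input is needed beyond the conditional density already at hand.
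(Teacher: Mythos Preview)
Your proposal is correct and follows exactly the paper's approach: the paper derives the lemma by applying Theorem~\ref{thm:expofsquare} (Konno's result) to the conditional matrix Beta density with $n_1=n_2=n$ and $\Delta=2\Ave$, which is precisely what you outline. The only content of the proof is the algebraic specialization you describe.
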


Setting $F=\Ave^{-1}$ yields
\begin{equation*}
\BBE[W_1\Ave^{-1}W_1|\Ave] = \frac{2n(n+1) - 2 + pn}{(2n-1)(n+1)} \Ave.
\end{equation*}
The same calculation can be carried out for $\BBE[W_2 \Ave^{-1}W_2|\Ave]$
to give
\begin{equation*}
\BBE[\Har|\Ave]
= 2\Ave - \bigg\{ \frac{2n(n+1) - 2 + pn}{(2n-1)(n+1)}\bigg\}\Ave
=\frac{n(2n-p)}{(2n-1)(n+1)} \Ave,
\end{equation*}
which is simply a rescaling of $\Ave$ by a deterministic
constant. We summarize this result as a theorem.
\begin{theorem} \label{thm:raoblackhar}
  Let $T=2n$ and $D$ be as 
  in Definition~\ref{def:realdatmodel}. If $\cP$ is a partition of size $2$
  with $|D_1| = |D_2| = n$, then 
  \[
  \BBE[\Har(\cP)|\Ave(\cP)] = \frac{n(2n-p)}{(2n-1)(n+1)}\Ave(\cP).
  \]
\end{theorem}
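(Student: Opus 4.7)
The plan is to combine the algebraic identity $\Har = 2W_i - W_i \Ave^{-1} W_i$ (valid for $i=1,2$) with the already-derived conditional distribution of $W_1$ given $\Ave$, then apply Lemma~\ref{lem:condexp} with the particular choice $F = \Ave^{-1}$, and finally exploit the symmetric roles of $W_1$ and $W_2$.

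First I would average the two versions of the identity to obtain
\[
\Har = 2\Ave - \tfrac{1}{2} W_1 \Ave^{-1} W_1 - \tfrac{1}{2} W_2 \Ave^{-1} W_2,
\]
and then take conditional expectation with respect to $\Ave$. This reduces the problem to computing $\BBE[W_i \Ave^{-1} W_i \mid \Ave]$ for $i=1,2$. For $i=1$, the conditional density \eqref{eqn:matrixbeta} displayed in the excerpt shows that $W_1 \mid \Ave$ is a matrix Beta $B(p;n,n;2\Ave)$, so the integral $\BBE[W_1 F W_1 \mid \Ave]$ is precisely the quantity handled by Lemma~\ref{lem:condexp}. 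Specializing to $F = \Ave^{-1}$ gives
\[
\BBE[W_1 \Ave^{-1} W_1 \mid \Ave] = \frac{\{n(2n+1)-2\}\Ave + n\{\Ave + p\Ave\}}{(2n-1)(n+1)} = \frac{2n(n+1) - 2 + pn}{(2n-1)(n+1)} \Ave,
\]
using $(\Ave \Ave^{-1} \Ave)^\top = \Ave$ and $\Tr(\Ave \Ave^{-1}) = p$.

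Next I would handle the $W_2$ term by symmetry: since $W_2 = 2\Ave - W_1$, the same shear transformation argument yields that $W_2 \mid \Ave$ is also $B(p;n,n;2\Ave)$, so $\BBE[W_2 \Ave^{-1} W_2 \mid \Ave]$ equals $\BBE[W_1 \Ave^{-1} W_1 \mid \Ave]$. Substituting back,
\[
\BBE[\Har(\cP) \mid \Ave(\cP)] = 2\Ave - \frac{2n(n+1) - 2 + pn}{(2n-1)(n+1)} \Ave,
\]
and simplifying the scalar factor gives $2(2n-1)(n+1) - [2n(n+1) - 2 + pn] = 2n^2 - pn = n(2n-p)$, so the bracketed multiplier collapses to $n(2n-p)/[(2n-1)(n+1)]$ as claimed.

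The only delicate step is the matrix Beta integral embedded in Lemma~\ref{lem:condexp}, but that lemma has already been established in the text, so the remaining work here is essentially bookkeeping: verifying the trace and transpose identities for $F = \Ave^{-1}$, invoking symmetry between $W_1$ and $W_2$, and performing the arithmetic simplification. No further probabilistic input is required.
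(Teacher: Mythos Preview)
Your proposal is correct and follows essentially the same route as the paper: average the two identities $\Har = 2W_i - W_i\Ave^{-1}W_i$, condition on $\Ave$, apply Lemma~\ref{lem:condexp} with $F=\Ave^{-1}$ to each term, and simplify the resulting scalar. The only cosmetic difference is that you justify the $W_2$ computation via the symmetry $W_2 = 2\Ave - W_1$, whereas the paper simply asserts that the same calculation works for $W_2$; both are fine.
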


Note that as $p/T=p/(2n) \to \Gamma \in(0,1/2)$,
the limiting spectral measure 
of the above conditional expectation converges to
\[
( 1 -\Gamma) Z,
\]
where $Z$ is a random variable distributed according to the
limiting spectral distribution of $\Ave$.

The above calculations can be further extended
by making a few adjustments to the matrices $W_1$, $W_2$.
A number of matrix estimators take the form
\[
\tilde\Ave := c(\Ave + d \hat \Lambda),
\]
where $c, d$ are positive scalars and $\hat{\Lambda}$ is a positive
semidefinite matrix, all depending only on $\Ave$.
Estimators of this form have been extensively studied
in the covariance estimation literature 
\citep{LedWol2004,FisSun2011,KI14}. 
One could take the extra step of applying the same regularization procedure to the matrices $W_1$ and $W_2$
before computing their (Rao-Blackwellized) harmonic mean.
Suppose we replace $W_1$ and $W_2$ with 
\begin{equation*}
\tilde W_1 := c(W_1 + d\hat\Lambda) \quad\hbox{and}\quad \tilde W_2 :=
c(W_2 + d\hat\Lambda),
\end{equation*}
respectively.
Letting $\tilde\Har$ be the harmonic mean of
$\tilde W_1$ and $\tilde W_2$, we can compute a Rao-Blackwell
improvement of $\tilde \Har$ in much the same way that we did for
$\Har$ above. Indeed, we still have
\begin{equation*}
\tilde \Har = 2 \tilde \Ave - \frac{\tilde W_1 \tilde \Ave^{-1} \tilde
W_1}{2} - \frac{\tilde W_2 \tilde \Ave^{-1} \tilde W_2}{2}.
\end{equation*}
We can compute the conditional expectation with respect to $\Ave$ as
follows
\begin{equation} \label{eqn:condexp1} \begin{aligned}
\BBE[\tilde \Har |\Ave] = 2 \tilde \Ave &- \frac{c^2}{2} \big(\BBE[W_1
\tilde \Ave^{-1} W_1|\Ave] + \BBE[W_2 \tilde \Ave^{-1} W_2|\Ave]\big) \\
&-c^2d \big( \Ave \tilde \Ave^{-1} \hat\Lambda +
\hat\Lambda\tilde \Ave^{-1} \Ave \big) -(cd)^2\hat\Lambda\tilde
\Ave^{-1}\hat\Lambda.
\end{aligned} \end{equation}
Using Lemma~\ref{lem:condexp}, we have 
\begin{equation} \label{eqn:condexp2}
\begin{aligned}
\frac{c^2}{2}\big(\BBE[W_1 &\tilde \Ave^{-1} W_1 | \Ave] + \BBE[W_2
  \tilde \Ave^{-1}W_2|\Ave]\big) \\
&= c^2\bigg[\frac{\{2n(n+1)-
    2\}\Ave\tilde\Ave^{-1}\Ave + n \Tr(\Ave \tilde
    \Ave^{-1})\Ave}{(2n-1)(n+1)}\bigg].
\end{aligned}
\end{equation}
Combining Equations~\eqref{eqn:condexp1} and~\eqref{eqn:condexp2},
we have
\begin{equation*} \begin{aligned}
\BBE[\tilde \Har |\Ave] = 2 \tilde \Ave
&- c^2\bigg[\frac{\{2n(n+1)-
  2\}\Ave\tilde\Ave^{-1}\Ave + n \Tr(\Ave \tilde \Ave^{-1})\Ave }{(2n-1)(n+1)}\bigg] \\
&-c^2d \big( \Ave \tilde \Ave^{-1} \hat\Lambda +
\hat\Lambda\tilde \Ave^{-1} \Ave \big) -(cd)^2\hat\Lambda\tilde
\Ave^{-1}\hat\Lambda,
\end{aligned} \end{equation*}
which we can write solely in terms of $\tilde\Ave$ and
$\hat\Lambda$ by substituting $c\Ave$ with $\tilde\Ave - cd\hat\Lambda$,
obtaining 
\begin{equation*} \begin{aligned}
\BBE[\tilde \Har |\Ave] = 2 \tilde \Ave
	&-\frac{[2n(n+1)-2]
 	[ \tilde\Ave -2cd
  	\hat\Lambda+(cd\hat\Lambda)\tilde\Ave^{-1}(cd\hat\Lambda) ] }
	{(2n-1)(n+1)} \\
&-\frac{[ np -
  ncd\Tr(\hat\Lambda\tilde\Ave^{-1}) ](\tilde\Ave-cd\hat\Lambda)}{(2n-1)(n+1)}
	- 2 cd\hat\Lambda +(cd\hat\Lambda)\tilde\Ave^{-1}(cd\hat\Lambda).
\end{aligned} \end{equation*}
We summarize the above results in the following Theorem. 
\begin{theorem}
\label{thm:raoblackreghar}
The Rao-Blackwell improvement of  $\tilde \Har$, the harmonic mean of the regularized matrices $c(W_1 + d \hat \Lambda)$  and $c(W_2 + d \hat{\Lambda})$,
where $c$ and $d$ are positive constants that only depend
on $\Ave$ and $\hat\Lambda$ is a positive definite matrix that
only depends on $\Ave$ given by 
\begin{equation*} \begin{aligned}
\BBE[\tilde \Har| \Ave] &=
n\bigg[\frac{2n -p +cd\Tr(\hat\Lambda\tilde\Ave^{-1})}{(2n-1)(n+1)}\bigg]
	\tilde\Ave
+\bigg[\frac{-n+1}{(2n-1)(n+1)}\bigg](cd\hat\Lambda)\tilde \Ave^{-1}
	(cd\hat\Lambda) \\
&~~~~~~+\bigg[\frac{2n+np-2-ncd\Tr(\hat\Lambda
  	\tilde\Ave^{-1})}{(2n-1)(n+1)}\bigg] cd\hat\Lambda.
\end{aligned} \end{equation*}
\end{theorem}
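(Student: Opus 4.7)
The plan is to follow exactly the outline already sketched in the paragraphs preceding the statement, filling in the remaining algebraic bookkeeping. First I would start from the harmonic mean identity
\[
\tilde\Har = 2\tilde\Ave - \tfrac{1}{2}\tilde W_1 \tilde\Ave^{-1}\tilde W_1 - \tfrac{1}{2}\tilde W_2 \tilde\Ave^{-1}\tilde W_2,
\]
which follows exactly as for the unregularized $\Har$ because $\tilde W_1 + \tilde W_2 = 2\tilde\Ave$, so $\tilde\Ave$ plays for $\tilde W_1,\tilde W_2$ the same role that $\Ave$ played for $W_1,W_2$. Expanding $\tilde W_i = c(W_i + d\hat\Lambda)$ in the quadratic term gives
\[
\tilde W_i\tilde\Ave^{-1}\tilde W_i = c^2 W_i\tilde\Ave^{-1}W_i + c^2 d\bigl(W_i\tilde\Ave^{-1}\hat\Lambda + \hat\Lambda \tilde\Ave^{-1}W_i\bigr) + (cd)^2 \hat\Lambda\tilde\Ave^{-1}\hat\Lambda.
\]

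Next I would take the conditional expectation with respect to $\Ave$, observing that $\tilde\Ave$ and $\hat\Lambda$ are functions of $\Ave$ and may be pulled out. The linear-in-$W_i$ terms contribute $\BBE[W_i\mid\Ave] = \Ave$ (by symmetry of the conditional Beta density or by averaging the two relations $W_1 + W_2 = 2\Ave$), and after averaging over $i=1,2$ I obtain the expression labelled \eqref{eqn:condexp1} in the excerpt. For the purely quadratic part I would apply Lemma~\ref{lem:condexp} with $F = \tilde\Ave^{-1}$; since $\tilde\Ave^{-1}$ is symmetric the transpose term collapses onto $\Ave\tilde\Ave^{-1}\Ave$, yielding \eqref{eqn:condexp2}. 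Combining these produces the displayed intermediate formula for $\BBE[\tilde\Har\mid\Ave]$ in terms of $\Ave,\tilde\Ave$ and $\hat\Lambda$.

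The final step is the change of variables $c\Ave = \tilde\Ave - cd\hat\Lambda$, applied everywhere $\Ave$ still appears. The $\Ave\tilde\Ave^{-1}\Ave$ expression expands into three pieces: $\tilde\Ave - 2cd\hat\Lambda + (cd\hat\Lambda)\tilde\Ave^{-1}(cd\hat\Lambda)$; the trace term $n\Tr(\Ave\tilde\Ave^{-1})\Ave$ becomes $[np - ncd\Tr(\hat\Lambda\tilde\Ave^{-1})](\tilde\Ave - cd\hat\Lambda)$; and the purely linear pieces in $\hat\Lambda$ and $\tilde\Ave$ combine as shown. Then I would collect coefficients on $\tilde\Ave$, on $cd\hat\Lambda$, and on $(cd\hat\Lambda)\tilde\Ave^{-1}(cd\hat\Lambda)$ separately. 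The $\tilde\Ave$ coefficient gathers a $2$ from the leading term and contributions $-\tfrac{2n(n+1)-2}{(2n-1)(n+1)}$ and $-\tfrac{np - ncd\Tr(\hat\Lambda\tilde\Ave^{-1})}{(2n-1)(n+1)}$; combining over the common denominator $(2n-1)(n+1)$ and simplifying $2(2n-1)(n+1) - 2n(n+1)+2 - np = n(2n-p)$ gives the stated leading coefficient $n[2n-p+cd\Tr(\hat\Lambda\tilde\Ave^{-1})]/[(2n-1)(n+1)]$. Analogous bookkeeping on the $cd\hat\Lambda$ and on the $(cd\hat\Lambda)\tilde\Ave^{-1}(cd\hat\Lambda)$ pieces yields the remaining two coefficients.

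The proof is entirely algebraic once Lemma~\ref{lem:condexp} is in hand, so no new probabilistic input is needed. The main obstacle is purely one of careful bookkeeping, particularly tracking the sign and the cancellations that reduce $2(2n-1)(n+1) - [2n(n+1)-2] - np$ to $n(2n-p)$ and likewise for the coefficient on $cd\hat\Lambda$, where the leading constant from $-2cd\hat\Lambda$ (outside) combines with the interior $+2cd\hat\Lambda\cdot\tfrac{2n(n+1)-2}{(2n-1)(n+1)}$ and the $+cd\hat\Lambda\cdot\tfrac{np - ncd\Tr(\hat\Lambda\tilde\Ave^{-1})}{(2n-1)(n+1)}$ term to give $[2n+np-2-ncd\Tr(\hat\Lambda\tilde\Ave^{-1})]/[(2n-1)(n+1)]$. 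The coefficient $(-n+1)/[(2n-1)(n+1)]$ on $(cd\hat\Lambda)\tilde\Ave^{-1}(cd\hat\Lambda)$ falls out of combining the $-[2n(n+1)-2]/[(2n-1)(n+1)]$ contribution from the expansion of $\Ave\tilde\Ave^{-1}\Ave$ with the $+1$ coefficient picked up directly from the $(cd)^2\hat\Lambda\tilde\Ave^{-1}\hat\Lambda$ term in \eqref{eqn:condexp1}.
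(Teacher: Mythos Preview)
Your proposal is correct and follows essentially the same approach as the paper: both start from the harmonic mean identity, expand $\tilde W_i = c(W_i + d\hat\Lambda)$, apply Lemma~\ref{lem:condexp} with $F=\tilde\Ave^{-1}$, substitute $c\Ave = \tilde\Ave - cd\hat\Lambda$, and then collect coefficients. One small bookkeeping remark: the net $+1$ multiplying $(cd\hat\Lambda)\tilde\Ave^{-1}(cd\hat\Lambda)$ before combining with the $\Ave\tilde\Ave^{-1}\Ave$ expansion actually comes from a $-1$ on the direct $(cd)^2\hat\Lambda\tilde\Ave^{-1}\hat\Lambda$ term in \eqref{eqn:condexp1} together with a $+2$ from the cross terms $-c^2d(\Ave\tilde\Ave^{-1}\hat\Lambda+\hat\Lambda\tilde\Ave^{-1}\Ave)$ after substitution, not from the direct term alone.
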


\begin{remark}
For linear shrinkage estimators of the form  
\begin{equation*}
\tilde \Ave = (1-\lambda)\Ave + \lambda \Id,
\end{equation*}
as in \citep{LedWol2004},
setting $\hat\Lambda = \Id$, $c=(1-\lambda)$, and $cd=\lambda$ in our
formula gives
\begin{equation} \label{eqn:fishsunupdate}
\begin{aligned}
\BBE[\tilde \Har| \Ave] &= n\bigg[\frac{2n -p
    +\lambda\Tr(\tilde\Ave^{-1})\}}{(2n-1)(n+1)}\bigg] \tilde\Ave
   +\bigg[\frac{-n+1}{(2n-1)(n+1)}\bigg] \lambda^2\tilde \Ave^{-1}\\
&~~~~~~+\bigg[\frac{2n+np-2-n\lambda\Tr(\tilde\Ave^{-1})}{(2n-1)(n+1)}\bigg]\lambda\Id.
\end{aligned} \end{equation}
\end{remark}

The results outlined above are unexpected, and somewhat odd.
The implication of
Theorem~\ref{thm:raoblackhar} is that the Rao-Blackwellization
of $\Har(\cP)$ is a deterministic constant multiple of $\Ave(\cP)$.
This suggests that the expense of computing $\Har(\cP)$ is not warranted,
since while $\Har(\cP)$ may improve upon $\Ave(\cP)$ as an estimator,
a scalar multiple of $\Ave(\cP)$ improves still further upon $\Har(\cP)$.
Figure~\ref{fig:4methods}
in Section~\ref{sec:expts} explores this point empirically
in the finite-sample regime.
On the other hand, the form of the Rao-Blackwellized estimator in
Equation~\eqref{eqn:fishsunupdate},
obtained from Theorem~\ref{thm:raoblackreghar}, bears noting.
In contrast to the Rao-Blackwellized version of $\Har$
considered in Theorem~\ref{thm:raoblackhar},
this estimator involves a linear combination of $\tilde \Ave$,
$\tilde \Ave^{-1}$ and $\Id$.
As a result, this estimator differs from the
linear shrinkage estimators considered elsewhere in the literature
\cite{LedWol2004,FisSun2011,Touloumis2015}.
The estimator in Theorem~\ref{thm:raoblackreghar},
which has not been proposed previously to the best of our knowledge,
falls under the heading of orthogonally invariant estimators,
as discussed in \cite{DonGavJoh2018}.
Thus, in particular, there should exist some orthogonally invariant loss
function for which the estimator in Equation~\eqref{eqn:fishsunupdate}
is the optimal estimator.
We leave further study of this estimator and its properties to future work.

\section{Eigenvector Recovery}
\label{sec:vectorrecover}

A major motivation for working with the operator norm is to obtain
guarantees on convergence of eigenvectors,
which are often the main object of interest in covariance estimation,
as in when the covariance is used for principal component analysis.
This is done via the Davis-Kahan theorem,
which bounds the distance between the leading eigenvectors $v_1(\hat\Sigma)$
and  $v_1(\Sigma)$ in terms of $\|\hat\Sigma  -\Sigma\|$.
For example, it can be shown \citep[Corollary 1]{YuWanSam2015} that
\begin{equation}
\label{eqn:DavisKahan}
\|v_1(\hat\Sigma) - v_1(\Sigma)\| \leq \frac{2^{\frac{3}{2}}\|\hat\Sigma - \Sigma\|}{\lambda_1(\Sigma) - \lambda_2(\Sigma)} \quad\hbox{if}\quad \langle v_1(\hat\Sigma),v_1(\Sigma)\rangle > 0
\end{equation}
In this section, we show that under a spiked covariance model,
the leading eigenvector of $\Har(\cP)$ carries information about the
leading eigenvector of the population covariance matrix $\Sigma$
in the regime $p/T \to \Gamma \in(0,1/2)$,
and we compare its performance with that of the leading eigenvector of $\Ave(\cP)$. 

\begin{definition}
\label{def:spikedmodel}
Let $D$ be a set of $T$
i.i.d.\ $p$-dimensional centered multivariate  real or complex Gaussians with
population covariance matrix
\begin{equation} \label{eqn:spikedmodel}
\Sigma = \Id + \theta vv^*,
\end{equation}
where $\theta > 0$ and $v$ is a $p$-dimensional (real or complex) unit-norm vector.
As in Proposition~\ref{prop:harmlimit}
assume that
\begin{equation*}
\Big| \frac{p}{T} - \Gamma\Big| \leq \frac{K}{p^2},
\end{equation*}
where $\Gamma \in (0,1/2)$ and $K > 0$ is a constant that does not depend on $p$ or $T$.
\end{definition}
\begin{remark}
\label{rem:multspik}
Let $D^0$ be a collection of multivariate real or complex Gaussians
with zero mean and covariance $\Id$.
If we define
\[
D = \{ \sqrt{\Sigma}x : x \in D^0\}=: \sqrt{\Sigma}D^0,
\]
where $\Sigma$ is given in Definition~\ref{def:spikedmodel}, then $D$
has the same distribution as the model in
Equation~\ref{eqn:spikedmodel}.
Moreover, by this same
transformation, we may take a partition $\cP^0$ of $D^0$
and generate a partition $\cP$ of
$D$ by replacing each $D_i^0$ in $\cP^0$ by $D_i:=\sqrt{\Sigma}D_i^0$.
With this definition we have the equality
\[
\Har(\cP) = \sqrt{\Sigma}\Har(\cP^0)\sqrt{\Sigma} \quad\hbox{and}\quad
\Ave(\cP) = \sqrt{\Sigma}\Ave(\cP^0)\sqrt{\Sigma}.
\]
\end{remark}

The Theorem below follows from well-established results in the
literature and can be generalized without any change to higher-rank
perturbations of the identity.
We focus here on the simple case of one spike in order to get more explicit insight into the performance of $\Har(\cP)$. 

\begin{theorem}
\label{thm:spikerecovery}
Let $D$ be a spiked model as in Definition~\ref{def:spikedmodel}, and 
suppose $\cP$ is a partition satisfying the conditions of
Proposition~\ref{prop:harmlimit}, with $N=2$. Then we have the almost sure
convergence
\[
\lambda_1\big\{\Har(\cP)\big\}\to\begin{cases} 1+\frac{\Gamma}{\theta} +
(1-\Gamma)\theta & \hbox{if $\theta >\sqrt{\frac{\Gamma}{1-\Gamma}}$}\\
1+ 2\sqrt{\Gamma}\sqrt{1-\Gamma} & \hbox{otherwise}, \end{cases}
\]
and
\[
\Big|\big\langle v_1\big\{\Har(\cP)\big\}, v\big\rangle \Big|^2 \to 
\begin{cases}
\frac{\theta + 1}{\theta}\frac{\theta^2(1-\Gamma) - \Gamma}{\theta^2(1-\Gamma) +\theta + \Gamma}&
\hbox{if $\theta > \sqrt{\frac{\Gamma}{1-\Gamma}}$}, \\ 0 & \hbox{otherwise.}
\end{cases}
\]
\end{theorem}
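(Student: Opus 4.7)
The plan is to reduce to the identity-covariance case via Remark \ref{rem:multspik}, writing $\Har(\cP) = \sqrt{\Sigma}\Har(\cP^0)\sqrt{\Sigma}$, and then carry out a BBP-type secular-equation analysis. Since $\sqrt{\Sigma}\Har(\cP^0)\sqrt{\Sigma}$ and $\Sigma\Har(\cP^0) = (\Id + \theta vv^*)\Har(\cP^0)$ share a spectrum, I would rearrange the eigenvalue equation for the latter, $(\lambda\Id - \Har(\cP^0))y = \theta vv^*\Har(\cP^0)y$, to see that any eigenvector $y$ with $\lambda \notin \mathrm{spec}(\Har(\cP^0))$ is a scalar multiple of $u := (\lambda\Id - \Har(\cP^0))^{-1}v$, and that $\lambda$ satisfies $\theta v^*\Har(\cP^0)(\lambda\Id - \Har(\cP^0))^{-1}v = 1$. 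The resolvent identity $H(\lambda\Id - H)^{-1} = -\Id + \lambda(\lambda\Id - H)^{-1}$ rewrites this as the secular equation
\begin{equation*}
v^*(\lambda\Id - \Har(\cP^0))^{-1} v = \frac{1+\theta}{\theta\lambda}.
\end{equation*}

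Because the identity-covariance complex Wishart model is unitarily invariant, so is $\Har(\cP^0)$, and the left-hand side above equals in distribution the normalized trace $\frac{1}{p}\Tr(\lambda\Id - \Har(\cP^0))^{-1}$, which by Proposition \ref{prop:harmlimit} converges almost surely to $G(\lambda)$, the Stieltjes transform of the limiting density on $[E_-,E_+]$ with $E_\pm = 1 \pm 2\sqrt{\Gamma(1-\Gamma)}$. The substitution $x = 1 + 2\sqrt{\Gamma(1-\Gamma)}\cos\vartheta$ reduces the defining integral for $G$ to standard trigonometric ones and yields
\begin{equation*}
G(z) = \frac{z - 1 + 2\Gamma - \sqrt{(z-1)^2 - 4\Gamma(1-\Gamma)}}{2\Gamma z}.
\end{equation*}
Isolating the radical in $G(\lambda) = (1+\theta)/(\theta\lambda)$ and squaring collapses to a linear equation with unique root $\lambda_* = 1 + \Gamma/\theta + (1-\Gamma)\theta$; by AM--GM this strictly exceeds $E_+$ precisely when $\theta > \sqrt{\Gamma/(1-\Gamma)}$. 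Below that threshold no outlier exists and, by the standard BBP no-outlier argument, the top eigenvalue of $\Har(\cP)$ sticks to $E_+$.

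For the overlap, the top eigenvector $x$ of $\Har(\cP)$ satisfies $x = \Sigma^{-1/2}y$ with $y$ proportional to $u$, and using $\Sigma^{-1/2}v = (1+\theta)^{-1/2}v$ the squared normalized overlap reduces to
\begin{equation*}
\frac{|\langle v,x\rangle|^2}{\|x\|^2} = \frac{|v^*u|^2/(1+\theta)}{\|u\|^2 - \tfrac{\theta}{1+\theta}|v^*u|^2}.
\end{equation*}
Unitary invariance then gives $|v^*u|^2 \to G(\lambda_*)^2$ and $\|u\|^2 \to -G'(\lambda_*)$ almost surely. Substituting $G(\lambda_*) = (1+\theta)/(\theta\lambda_*)$ and computing $G'(\lambda_*)$ by implicit differentiation of the algebraic identity $2\Gamma z G(z) = z-1+2\Gamma - \sqrt{(z-1)^2 - 4\Gamma(1-\Gamma)}$ produces a striking cancellation, collapsing the denominator to $(1+\theta)/\big(\lambda_*\,[(1-\Gamma)\theta^2 - \Gamma]\big)$; routine simplification then yields the displayed formula. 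In the subcritical regime, the delocalization of the top eigenvector of the invariant ensemble $\Har(\cP^0)$ forces the overlap to vanish.

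The main obstacle is upgrading the distributional identities coming from unitary invariance into the almost-sure quadratic-form convergence $v^*(\lambda\Id - \Har(\cP^0))^{-k}v \to \int(\lambda-x)^{-k}\mu(dx)$ uniformly in $\lambda$ slightly off the bulk, together with controlling the largest eigenvalue of $\Har(\cP^0)$ at $E_+$ strongly enough to exclude outliers in the subcritical regime. These require an isotropic local law and an edge-convergence statement for the harmonic mean of Wisharts that go beyond the bulk content of Proposition \ref{prop:harmlimit}, and which ultimately rest on the free-probability framework of \cite{Lod19}.
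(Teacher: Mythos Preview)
Your approach is sound and reaches the correct formulas, but it takes a different route from the paper. The paper does not set up the secular equation or analyze resolvent quadratic forms directly; instead it invokes the general multiplicative-spike framework of Benaych-Georges and Rao \cite{BGR11} (their Theorem~2.7 and Remark~2.11) as a black box. That framework applies to any unitarily invariant Hermitian $M$ with almost-sure weak spectral convergence and edge convergence, and gives the outlier location and overlap in terms of the auxiliary transform $t_\nu(z) = -1 + z m_\nu(z)$ and its derivative. The paper then simply checks that $\Har(\cP^0)$ satisfies these hypotheses via Proposition~\ref{prop:harmlimit}, rewrites the quadratic fixed-point equation $\Gamma z m_\nu(z)^2 + (1-2\Gamma-z)m_\nu(z) + 1 = 0$ from \cite{Lod19} as $\Gamma t_\nu(z)^2 + (1-z)t_\nu(z) + 1-\Gamma = 0$, and reads off $t_\nu(E_+)$, $t_\nu^{-1}(1/\theta)$, and $t_\nu'(\rho)$ by elementary algebra. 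Your secular equation $v^*(\lambda\Id - H)^{-1}v = (1+\theta)/(\theta\lambda)$ is exactly $t_\nu(\lambda) = 1/\theta$, so you are in effect re-deriving the relevant parts of \cite{BGR11}. The paper's route is more economical precisely because the isotropic-concentration and no-outlier issues you correctly flag in your final paragraph are already absorbed into the cited theorems.

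Two small slips are worth noting. First, unitary invariance does not make $v^*(\lambda\Id - H)^{-1}v$ \emph{equal in distribution} to the normalized trace; it makes it equal to $\sum_i |w_i|^2(\lambda - \lambda_i)^{-1}$ with $w$ Haar on the sphere, which then concentrates around the normalized trace. Second, your AM--GM step is not the right threshold argument: $\lambda_* = 1 + \Gamma/\theta + (1-\Gamma)\theta$ strictly exceeds $E_+$ for \emph{every} $\theta \neq \sqrt{\Gamma/(1-\Gamma)}$, not only for $\theta$ above that value. The genuine threshold comes from the sign condition you imposed when isolating the radical (equivalently, from the range of $t_\nu$ on $(E_+,\infty)$ being $(0,\sqrt{(1-\Gamma)/\Gamma})$), so that for $\theta < \sqrt{\Gamma/(1-\Gamma)}$ the root $\lambda_*$ is spurious. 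Neither issue affects your final formulas.
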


\begin{proof}
To prove this result we will use the general framework  
\citep{BGR11},
 which considers multiplicative spikes of the form
\begin{equation*}
\tilde{M} := \sqrt{\Id + \theta vv^*} M\sqrt{\Id + \theta vv^*}.  
\end{equation*}
Here $\theta$ and $v$ are as in Definition~\ref{def:spikedmodel} and
$M$ is a symmetric (or Hermitian if $M$ has complex entries) matrix whose eigenvalue 
distribution converges weakly to a spectral measure $\nu$ almost surely, and $\nu$ is supported on the interval $[a,b]$. 
Assume further that the convergence of the largest (smallest) eigenvalue of $M$
is the right (left) edge of the support of $\nu$ and that the
distribution of $M$ is invariant under conjugation by an orthogonal matrix (unitary if $M$ has complex entries).
Recall that this implies that the matrix of eigenvectors of $M$ is Haar distributed on
the orthogonal group (unitary group for the complex case).
Define for $z \in \BBC\backslash [a,b]$
\begin{align*}
m_\nu(z) &:= \int_{\BBR} \frac{\nu(\diff x)}{z-x},\\
t_\nu(z) &:= \int_{\BBR} \frac{x\nu(\diff x)}{z-x} = -1+z m_\nu(z),
\end{align*}
and let $t_\nu^{-1}(z)$ be the functional inverse of $t_\nu$.
By \citep[Theorem 2.7]{BGR11} we have the almost sure convergence
\[
\lambda_1(\tilde{M}) \to \begin{cases}
t_\nu^{-1}\Big(\frac{1}{\theta}\Big) & \theta> \frac{1}{t_\nu(b^+)},\\
b & \mathrm{otherwise}.
\end{cases}
\]
Here $t_\nu(b^+)$ is the limit as $z \to b$ of $t_\nu(z)$, well defined when $\nu$ has a density with square root decay near $b$
\citep[Proposition 2.10]{BGR11}. Furthermore, by \citep[Remark 2.11]{BGR11} we have the almost sure convergence
\[
|\langle v_1(\tilde{M}), v \rangle|^2 \to \begin{cases}
-\frac{\theta + 1}{\theta^2 \rho t_\nu'(\rho) } & \hbox{if $\theta > 1/t_\nu(b)$,}\\
0 &\hbox{otherwise},
\end{cases}
\]
where
\[
\rho := t_\nu^{-1}\bigg(\frac{1}{\theta}\bigg).
\]

Applying these results using Remark~\ref{rem:multspik}
and taking $M=\Har(\cP^0)$ where $\cP^0$ is the partition of a data set $D^0$ with population covariance $\Id$,
we see that $M$ satisfies the required
convergence properties by Proposition~\ref{prop:harmlimit} and is unitarily
invariant. Letting $\nu$ equal to the limiting spectral measure of
$\Har(\cP^0)$, and noting for $N=2$
\[
E_{\pm} = 1 \pm 2 \sqrt{\Gamma}\sqrt{1-\Gamma},
\]
the proof now proceeds by calculation.  From the results of
\citep[Equation (18)]{Lod19}, $m_\nu(z)$ satisfies the fixed point equation
\[
\Gamma z m_\nu(z)^2 + (1-2\Gamma - z)m_\nu(z) + 1 = 0 . 
\]
Inserting the definition of $t_\nu(z)$ and simplifying yields
\[
\Gamma t_\nu(z)^2 + (1-z)t_\nu(z) + 1-\Gamma = 0.
\]
Taking the limit as  $z$ goes to $E_+$ and utilizing the square
root decay of $\nu$ at $E_+$ yields
\[
\Bigg(t_\nu(E_+) - \sqrt{\frac{1-\Gamma}{\Gamma}}\Bigg)^2 = 0
\implies t_\nu(E_+) = \sqrt{\frac{1-\Gamma}{\Gamma}}.
\]
Hence, a phase transition in the largest eigenvalue of $\Har(\cP)$
occurs for
$\theta > \sqrt{\Gamma/(1-\Gamma)}$.
We can solve for the inverse of $t_\nu(z)$ by substituting
$z = t_\nu^{-1}(w)$ into the polynomial fixed point equation for
$t_\nu(z)$,
\[
t_\nu^{-1}(w) = 1 + \Gamma w + \frac{1-\Gamma}{w}.
\]
Assuming $\theta > \sqrt{\Gamma/(1-\Gamma)}$ and inserting $w=1/\theta$
gives the location of the spiked eigenvalue of $\Har(\cP)$,
\begin{equation*}
\rho = t_\nu^{-1}\bigg(\frac{1}{\theta}\bigg) = 1 + \frac{\Gamma}{\theta}+ (1-\Gamma)\theta.
\end{equation*}
Differentiating the fixed point equation of $t_\nu(z)$
gives the following fixed point equation for $t_\nu'(z)$
\begin{equation*}
(2\Gamma t_\nu(z) +1-z) t_\nu'(z) - t_\nu(z) =0. 
\end{equation*}
Substituting $\rho$ yields
\[
t_\nu'(\rho) = \frac{1}{2\Gamma + \theta(1 - \rho)} = \frac{1}{\Gamma -
\theta^2(1-\Gamma)}, 
\]
and hence
\[
-\frac{\theta + 1}{\theta^2\rho t_\nu'(\rho)} = 
\frac{\theta + 1}{\theta^2}\frac{\theta^2(1-\Gamma) - \Gamma}{1 +
\frac{\Gamma}{\theta} + (1-\Gamma)\theta}, 
\]
which concludes the proof.
\end{proof}

\begin{remark}
\label{rem:arithmetspike}
The same analysis has been performed on $\Ave(\cP)$
\citep{BBP05,De07,HoRa07,Bo08}. Under this setting, we have
the almost sure convergence \citep[Section 3.2]{BGR11}
\[
\lambda_1\big\{\Ave(\cP)\big\} \to\begin{cases}
(\theta +1)\big(1 + \frac{\Gamma}{\theta}\big) &\hbox{if $\theta > \sqrt{\Gamma}$,}\\
\big(1+ \sqrt{\Gamma}\big)^2 &\hbox{otherwise},
\end{cases}
\]
and
\[
\Big|\big\langle v_1\big\{\Ave(\cP)\big\}, v \big\rangle \Big|^2 \to
\begin{cases}
\frac{1 - \frac{\Gamma}{\theta^2}}{1 + \frac{\Gamma}{\theta}} & \hbox{if
$\theta > \sqrt{\Gamma}$,}\\
0 & \hbox{otherwise.}
\end{cases}
\]
\end{remark}

When $0 < \Gamma < \frac{1}{2}$, we have
$\sqrt{\Gamma/(1-\Gamma)} > \sqrt{\Gamma}$,
and it is possible to choose $\theta$ such that
\begin{equation} \label{eq:thetarange}
\sqrt{\Gamma}< \theta  \leq \sqrt{\frac{\Gamma}{1-\Gamma}}.
\end{equation}
Comparing the phase transition for the harmonic mean given in
Theorem~\ref{thm:spikerecovery} and that for the arithmetic mean
given in Remark~\ref{rem:arithmetspike}, we see that when
$\theta$ satisfies Equation~\eqref{eq:thetarange},
Theorem~\ref{thm:spikerecovery} and Remark~\ref{rem:arithmetspike}
imply the almost sure convergence
\begin{align*}
\Big|\big\langle v_1\big\{\Har(\cP)\big\}, v\big\rangle \Big|^2 &\to 
0 , \\
\Big|\big\langle v_1\big\{\Ave(\cP)\big\}, v \big\rangle \Big|^2 &\to
\frac{1 - \frac{\Gamma}{\theta^2}}{1+\frac{\Gamma}{\theta}}.
\end{align*}
This means that for low signal strength $\theta$,
$v_1\big\{\Har(\cP)\big\}$ fails to have any relationship with $v$.

On the other hand, when $\theta > \sqrt{\Gamma/(1-\Gamma)}$ the leading eigenvectors of both $\Har(\cP)$ and $\Ave(\cP)$ have some relationship with $v$ in the limit as $p/T \to \Gamma$. We observe that
\begin{equation*} \begin{aligned}
\lim_{p,T\to\infty} &\bigg(\Big|\big\langle v_1\big\{\Ave(\cP)\big\},v \big\rangle\Big|^2  - \Big|\big\langle v_1\big\{\Har(\cP)\big\},v \big\rangle\Big|^2   \bigg)\\
&=\frac{1 - \frac{\Gamma}{\theta^2}}{1 + \frac{\Gamma}{\theta}} - \frac{\theta + 1}{\theta}\frac{\theta^2(1-\Gamma)- \Gamma}{\theta^2(1-\Gamma) + \theta + \Gamma}
= \frac{\Gamma^2(1+\theta)^2}{\big(1 + \frac{\Gamma}{\theta}\big)\theta \big\{ \theta^2(1-\Gamma) + \theta + \Gamma\big\}} > 0,
\end{aligned} \end{equation*}
so that the leading eigenvector of $\Ave(\cP)$ functions as a better estimator,
asymptotically, for all possible values of $\theta$.

Compare this result with the bound predicted by solely
analyzing the upper bounds obtained from the Davis-Kahan Theorem.
That is, taking $\lambda_1(\Sigma) - \lambda_2(\Sigma) = \theta$
in Equation~\eqref{eqn:DavisKahan},
we have
\begin{equation*} \begin{aligned}
\Big\|v_1\big\{\Har(\cP)\big\}- v\Big\| &\leq \frac{2^{\frac{3}{2}}\|\Har(\cP) - \Sigma\|}{\theta} \quad\hbox{when}\quad \big\langle v_1\big\{\Har(\cP)\big\},v\big\rangle>0, \text{ and }\\
\Big\|v_1\big\{\Ave(\cP)\big\} - v\Big\| &\leq \frac{2^{\frac{3}{2}}\|\Ave(\cP) - \Sigma\|}{\theta}\quad\hbox{when}\quad \big\langle v_1\big\{\Ave(\cP)\big\},v\big\rangle>0.\\
\end{aligned} \end{equation*}
Since the condition number of $\Sigma$ is $1+\theta$,
by Proposition~\ref{prop:harmlimit:general} we have
\begin{equation*}
\limsup_{p,T\to\infty} \frac{\|\Har(\cP) - \Sigma\|}{\|\Ave(\cP) - \Sigma\|} < 1
\text{ whenever }
\theta < \frac{2 + \sqrt{\Gamma}}{2\sqrt{1-\Gamma}} - 1.
\end{equation*}
In order for $|\langle v_1\{\Ave(\cP)\}, v\rangle|^2 \not\to 0$,
we would need $ \theta > \sqrt{\Gamma}$. Notice that as $\Gamma \to  1/2$,
\begin{equation*}
\lim_{\Gamma \to \frac{1}{2}} \frac{2 + \sqrt{\Gamma}}{2\sqrt{1-\Gamma}} - 1
= \sqrt{2}-  \frac{1}{2} > \lim_{\Gamma \to \frac{1}{2}} \sqrt{\Gamma}
= \frac{1}{\sqrt{2}}.
\end{equation*}
It follows that there exist values of $\theta$ and $\Gamma$
for which the Davis-Kahan theorem
predicts that the harmonic mean yields
better eigenvector recovery than the arithmetic mean,
even though $v_1\{\Har(\cP)\}$
has no asymptotic relationship to $v$ while $v_1\{\Ave(\cP)\}$ does.

\section{Experiments} \label{sec:expts}

Our theoretical results presented above are asymptotic,
so we complement them here with
a brief investigation of the empirical finite-sample
performance of the harmonic and arithmetic matrix means 
and related estimators.
We begin by comparing the operator norm error of the arithmetic and
harmonic matrix means in recovering the true covariance matrix.
We generate a random covariance matrix
$\Sigma = U D U^T \in \BBR^{p \times p}$
by choosing $U \in \BBR^{p \times p}$ according to Haar measure on the
orthogonal matrices and populating the entries of the diagonal matrix $D$
with i.i.d.\ draws from the uniform distribution on the interval $[1,b]$.
The parameter $b \ge 1$ serves as a proxy for the condition number of 
$\Sigma$ (e.g., $b=1$ corresponds to $\Sigma = \Id$).
We then draw $T=4p$ independent mean 0 normal random vectors
with covariance matrix $\Sigma$.
Splitting these $T$ random vectors into two equal size samples of size $n=T/2$,
we compute the harmonic mean of the two resulting sample covariances.   
We compare the performance of this estimator to the
sample covariance matrix of the full sample
(i.e., the arithmetic mean of the two splits).
We also include, for the sake of comparison,
the shrinkage estimator proposed by Fisher and Sun \citep{FisSun2011} specifically for normal data in the high-dimensional setting, which should improve on the sample covariance matrix.  
Similar to the scheme originally proposed by Ledoit and Wolf \citep{LedWol2004}, this estimator is a convex combination of the sample covariance matrix and a target matrix,
which we take here to be the identity.

\begin{figure*}[t]
  \centering
  \includegraphics[width=\textwidth]{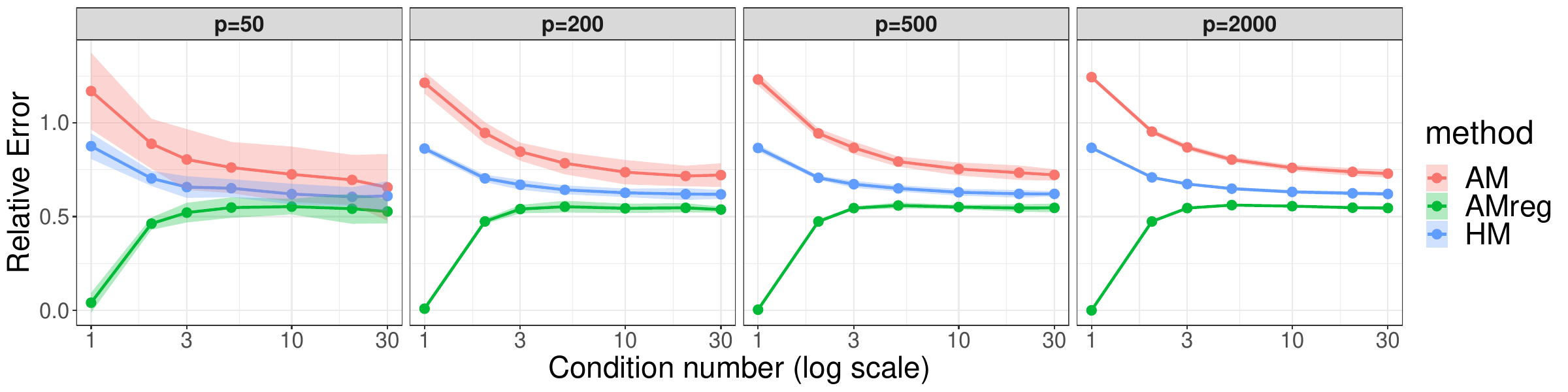}
  \vspace{-8mm}
  \caption{\small Operator norm relative error
	$\|\hat \Sigma - \Sigma \| /\| \Sigma\|$
	as a function of the condition number parameter $b$
	for different choices of the data dimension $p$.
	The plot compares the arithmetic matrix mean (red),
	the Fisher-Sun estimator (green)
	and the harmonic matrix mean (blue).
	Each point is the mean of $20$ independent trials, with the shaded
	regions indicating two standard deviations.
	}
	\label{fig:haar:pfacet}
\end{figure*}

\begin{figure*}[t]
  \centering
  \includegraphics[width=\textwidth]{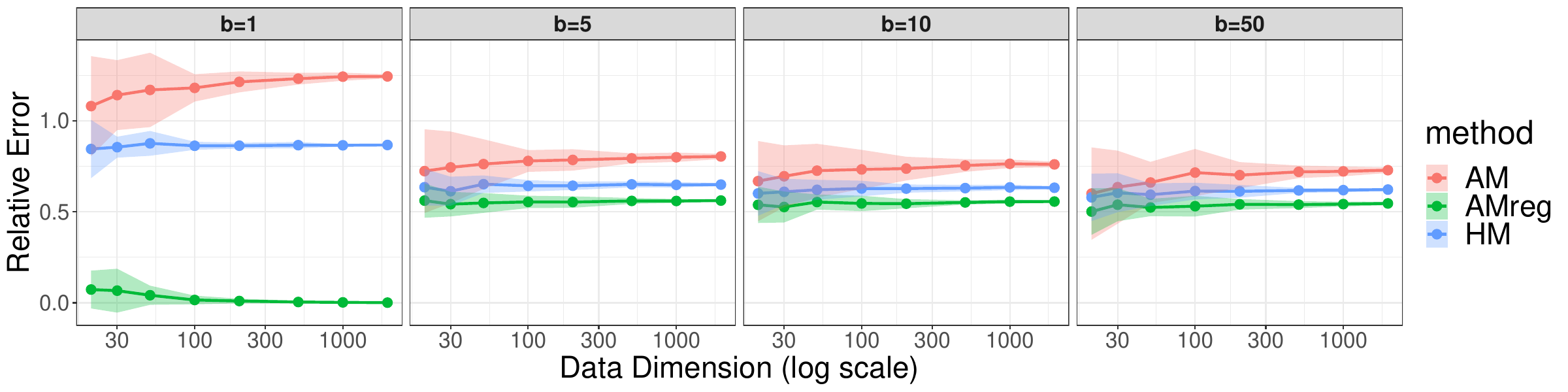}
  \vspace{-8mm}
  \caption{\small Operator norm relative error in recovering the true
    covariance matrix $\Sigma$ as a function of the data dimension
    for different choices of condition number parameter $b$
	for the arithmetic mean (red), Fisher-Sun regularized
	sample covariance (green) and harmonic mean (blue).
	Each point is the mean of $20$ independent trials, with the shaded
	regions indicating two standard deviations.
	}
	\label{fig:haar:bfacet}
\end{figure*}

Figures~\ref{fig:haar:pfacet} and~\ref{fig:haar:bfacet} 
display, for various choices of
the dimension $p$ and the condition number $b$,
the operator norm relative error 
in recovering $\Sigma$ for these three estimators.
Over a wide range of condition numbers,
the harmonic mean yields a better estimate of the population
covariance $\Sigma$ than does the arithmetic mean,
but does not manage to match the performance of the Fisher-Sun regularized estimator.  
Unsurprisingly,
when the regularization target matrix is close to the truth,
as is the case when the condition number $b$ is close to $1$,
regularization yields an especially large improvement in estimation error,
but the gap between the harmonic mean and the Fisher-Sun estimator
narrows substantially as soon as the condition number becomes even
moderately large, corresponding to the population covariance matrix
being far from the identity.
In keeping with Proposition~\ref{prop:harmlimit:general},
which suggests that we should only expect the harmonic mean to yield
improvement over the arithmetic mean for suitably small condition numbers,
the size of the improvement of harmonic mean over the (unregularized)
arithmetic mean does appear to shrink as the condition number increases.
Note that performance stabilizes as $b$ and $p$ increase
because we are assessing the estimators according to relative error,
not because the problem is necessarily becoming easier for larger values
of these parameters.

It is natural to ask how these estimators compare as the number of
observations vary.
Toward that end, consider the same experimental setup discussed above,
but taking $\lceil 2qp \rceil$ samples, with $q > 1$ to ensure that
splitting the observations into two samples yields two invertible sample covariances.
Larger values of $q$ correspond, roughly, to better-conditioned
sample covariance matrices.
Figure~\ref{fig:haar:nsweep} shows how the three estimators of the
population matrix mean compare as a function of this parameter $q$
for different choices of the dimension $p$ and condition number $b$.
We see that as the number of samples increases (i.e., as $q$ increases),
the improvement of the harmonic mean over the arithmetic mean
decreases.
This is in keeping with Proposition~\ref{prop:harmlimit} as well as the intuition that as the number of samples increases, the sample covariance becomes a more stable (though still not consistent) estimate of the population covariance.

In Section~\ref{sec:raoblackwell}, we saw that in the $N=2$ case,
the matrix harmonic mean $\Har$ could be further improved
by Rao-Blackwellization.
Thus, we have four possible estimates of the population covariance:
the arithmetic mean, the Fisher-Sun regularization of the arithmetic mean,
the harmonic mean and the Rao-Blackwellization of the harmonic mean.
Figure~\ref{fig:4methods} compares these four different estimators,
under the same experimental setup as the one described above.
The plot shows that Rao-Blackwellizing the harmonic mean does little
to change its behavior. Indeed, the performance of the harmonic mean
and its Rao-Blackwellization are so similar that their lines overlap
in the plot.
As in the plots above, the arithmetic mean performs more poorly as an
estimator compared to the harmonic mean, but regularization
using the method of Fisher and Sun
improves its performance over that of the harmonic mean, if only slightly.

\begin{figure*}
  \centering
  \includegraphics[width=0.8\textwidth]{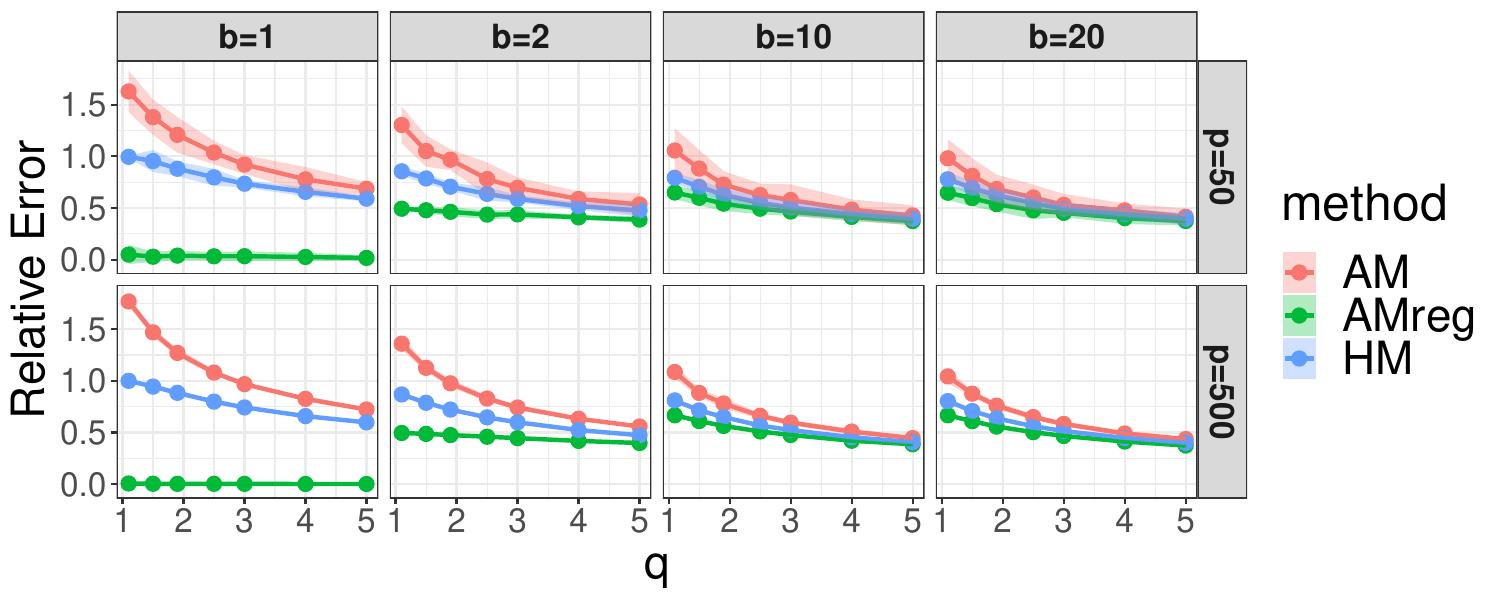}
  \vspace{-4mm}
  \caption{\small Operator norm relative error in recovering the true covariance matrix $\Sigma$ as a function of the number of normal samples,
	parameterized by $q$, for different choices of the data dimension $p$
	and the condition number $b$.
	for the arithmetic mean (red),
	Fisher-Sun regularization of the arithmetic mean (green),
	and harmonic mean (blue).
	Each point is the mean of $20$ independent trials, with the shaded
	regions indicating two standard deviations.
} \label{fig:haar:nsweep}
\end{figure*}

\begin{figure*}[h]
  \centering
  \includegraphics[width=\textwidth]{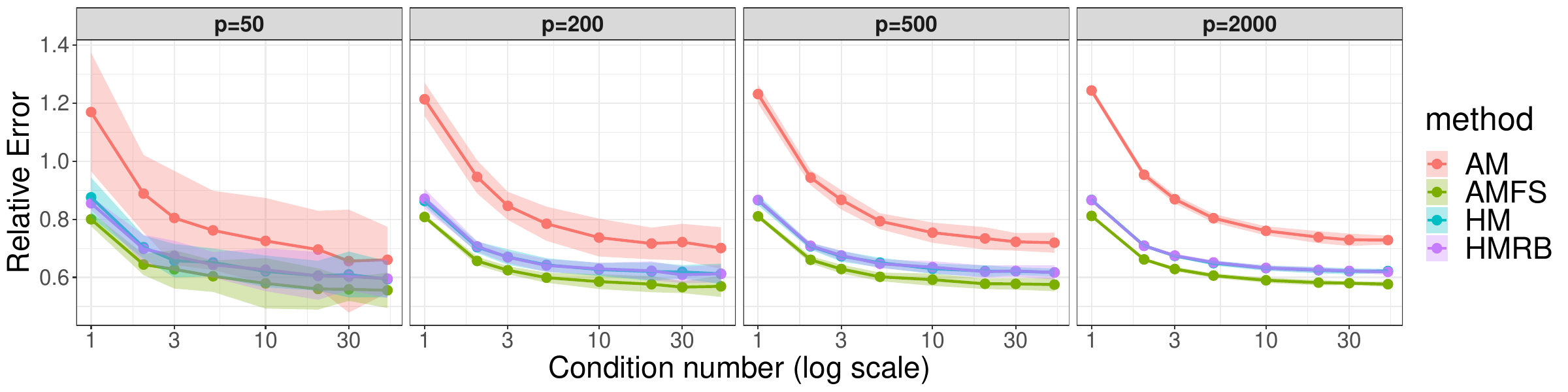}
  \vspace{-7mm}
  \caption{\small Operator norm relative error in recovering the true
    	covariance matrix $\Sigma$ as a function of the condition number $b$
    	for different choices the data dimension $p$
        for the arithmetic mean (red), 
	Fisher-Sun regularization of the arithmetic mean (green),
	harmonic mean (blue) and
	Rao-Blackwellized harmonic mean (purple).
        Each point is the mean of $20$ independent trials, with the shaded
        regions indicating two standard deviations.
	Note that the lines corresponding to the harmonic mean
	and its Rao-Blackwellization overlap.}
	\label{fig:4methods}
\end{figure*}

We close by briefly exploring the eigenvector recovery results discussed in Section~\ref{sec:vectorrecover}.
Recall that the spiked eigenvector estimation problem considered in that section concerns Wishart matrices with covariance $\Sigma = \Id + \theta v v^*$, where $\theta > 0$ and $v \in \BBR^p$ has unit norm, and the goal is to recover the spike eigenvector $v$ based on $N=2$ Wishart matrices, each constructed on $n$ independent mean-$0$ covariance-$\Sigma$ normals.
Theorem~\ref{thm:spikerecovery} and Remark~\ref{rem:arithmetspike} predict that (in the large-$p$ limit) as $\theta$ increases, the absolute value of the inner products between $v$ and the leading eigenvector of both the arithmetic and the harmonic mean increase to $1$.
Further, this behavior undergoes a phase transition-like change, the location of which is determined by $\Gamma = \lim p/Nn$.
In particular the arithmetic mean undergoes its phase transition at $\theta = \sqrt{\Gamma}$, and the harmonic mean undergoing its phase transition at $\theta = \sqrt{\Gamma/(1-\Gamma)}$.
Figure~\ref{fig:vectorrecover} examines this behavior in the finite-sample regime.

Consider a pair of independent Wishart matrices, each based on $n=4000$ independent normals of dimension $p=2000$ with mean $0$ and covariance $\Sigma = \Id + \theta v v^*$ where $\theta > 0$ and $v \in \BBR^p$ has unit norm.
That is, we are under the setting of Theorem~\ref{thm:spikerecovery}, with $N=2$ and $\Gamma=p/Nn =1/4$.
Having generated two such Wisharts, we can compute the arithmetic and harmonic means of these two covariance matrices and compare, for each of these two different means, how well the leading eigenvector $\hat v$ estimates the true spike eigenvector $v$, as measured by $|\langle \hat v, v \rangle|$.
Figure~\ref{fig:vectorrecover} summarizes this experiment.
The plot shows, for both the arithmetic (red) and the harmonic (blue) matrix means, the recovery of the spike $v$ by the leading eigenvector of the matrix mean, as a function of the signal strength $\theta > 0$.
Each data point is the mean of twenty independent trials, with error bars indicating two standard errors of the mean.
With $\Gamma=1/4$, the theory presented in Section~\ref{sec:vectorrecover} predicts that in the large-$p$ limit, below $\theta = 1/\sqrt{3} \approx 0.577$, the inner product between the leading eigenvector of the harmonic mean with $v$ should be close to zero.
Similarly, in the case of the arithmetic mean, the inner product between $v$ and the leading eigenvector of the arithmetic mean should be close to zero below $\theta = 1/2$.
Of course, Figure~\ref{fig:vectorrecover} reflects this inner product for the finite-sample setting $p=2000$.
Nonetheless, examining the plot, we see that the predictions of Section~\ref{sec:vectorrecover} are borne out.
Both the arithmetic and harmonic mean improve in their detection of the spike as $\theta$ increases, with the arithmetic mean appearing to detect the spike eigenvector before the harmonic mean does.
Also as predicted by the theory, the arithmetic mean maintains better estimation of the leading eigenvector at all values of $\theta$.

\begin{figure*}[h]
  \centering
  \includegraphics[width=0.7\textwidth]{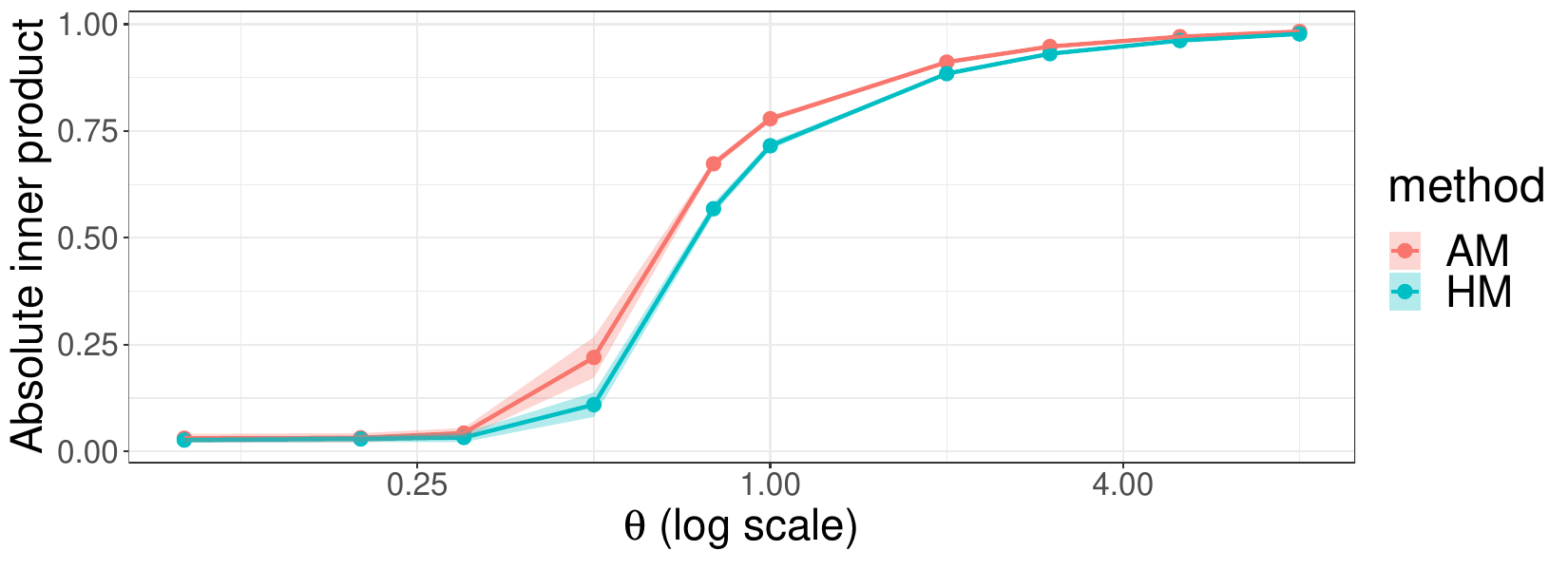}
  \vspace{-4mm}
  \caption{\small Recovery of the spike eigenvector $v$, as measured by the absolute value of the inner product between $v$ and the leading eigenvector of the arithmetic matrix mean (red) and the harmonic matrix mean (blue), as a function of the signal strength $\theta$. Each data point is the mean of twenty independent trials, with error bars indicating two standard errors of the mean.}
	\label{fig:vectorrecover}
\end{figure*}

\section{Discussion} \label{sec:discussion}

The results presented here seem to contradict our intuition about matrix
estimation under various matrix norms.
It is common in the literature to measure the quality of an estimator
$\hat \Sigma$ by the limit of the Frobenius norm error
$\|\hat\Sigma -\Sigma\|_F$, and shrinkage estimators
such as \citep{LedWol2004} are designed to minimize this quantity.
Since the operator norm is bounded above by the Frobenius norm,
controlling the Frobenius norm error is sufficient to control 
the operator norm.
However, in the high-dimensional regime, these arguments become more subtle.
One must often normalize the Frobenius norm
by the matrix dimension to ensure convergence
and obtain a sensible asymptotic analysis.
Typically, this normalization takes the form
$p^{-\frac{1}{2}}\|\hat{\Sigma} - \Sigma\|_F$.
The analogous operator norm quantity,
$p^{-\frac{1}{2}}\| \hat{\Sigma} - \Sigma \|$,
converges to zero in many common settings,
rendering the upper bound in terms of
$p^{-\frac{1}{2}}\|\hat{\Sigma} - \Sigma\|_F$ (asymptotically) trivial.
For example, when $\hat{\Sigma}$ is Wishart-distributed
(as happens when $\hat{\Sigma}$ is a sample covariance),
$\| \hat{\Sigma} - \Sigma\| = O(1)$.
Thus, obtaining non-trivial bounds on  
$\|  \hat{\Sigma} - \Sigma\|$ requires direct analysis rather than a Frobenius norm bound.

Ultimately, the choice to analyze the operator norm error
as opposed to the Frobenius norm error or some other matrix norm
is guided by the inference task at hand, and the available information about
the population covariance $\Sigma$ that one wishes to capture
in the estimator $\hat\Sigma$.  For the task of recovering the leading eigenvector(s) of $\Sigma$,   the operator norm takes a more prominent
role due to the Davis-Kahan inequality,
but here again the resulting bound need not be tight,
and the resulting bound on eigenvector recovery may be trivial.

In summary,
our results highlight the ways in which the harmonic mean $\Har(\cP)$
may outperform the arithmetic mean $\Ave(\cP)$ as an estimator
of the population covariance $\Sigma$:
\begin{itemize} 
\item Under certain conditions on $\Sigma$,
	the operator norm error of $\Har(\cP)$ is {\em better than}
	that of $\Ave(\cP)$.
	In particular, when $\Sigma=\Id$,
	the normalized Frobenius norm error of
	$\Har(\cP)$ {\em matches} that of $\Ave(\cP)$, despite
	$\Har(\cP)$ not being optimized for the Frobenius norm loss.
	We have observed similar behavior empirically
	when $\Sigma$ is close but not equal to the identity.
\item For a spiked model $\Sigma= \Id + \theta v v^*$,
	there is a range of values of $\theta$
	for which the eigenvector recovery using $\Har(\cP)$ 
	is always {\em worse than} that obtained using $\Ave(\cP)$,
	even though  $\Har(\cP)$ provides a {\em better} operator norm error
	than $\Ave(\cP)$. 
\end{itemize}
We are not aware of any other estimators with these two properties,
let alone of one that has the interpretation of being a mean with respect
to a different geometry.
Moreover, the fact that $\Har(\cP)$ can be interpreted as the result of
a data-splitting procedure suggests the possibility
of other procedures with similar interpretations that
improve over classical estimators in the high-dimensional regime.
The Rao-Blackwellization of $\Har(\cP)$ shows that,
for the purpose of covariance estimation,
a similar or better improvement over $\Ave(\cP)$
can also be achieved by a suitably-chosen scalar multiple of $\Ave(\cP)$.
While this shows that $\Har(\cP)$ has better-performing alternatives in
practice,
our results shed light on the behavior of different means in high dimensions,
opening the door to future work and a better understanding
other measures of matrix estimation error.  

Our original reason for investigating other matrix means was the problem of
misaligned observations, as happens in brain imaging data.
In such settings, pooling the raw data is not feasible, 
(e.g., the underlying time series of resting state fMRI imaging), since observations cannot be aligned across samples.
We initially believed that $\Har(\cP)$ outperforming $\Ave(\cP)$ empirically was a consequence of this setting, but surprisingly found it to be the case 
even in the classic covariance estimation problem with no misalignment.  
The results presented here are only a partial explanation of this phenomenon,
and a better understanding of the various matrix means in both the aligned and the misaligned settings  in high dimensions warrants further study.

\section*{Acknowledgements}
K. Levin and A. Lodhia were supported by a NSF DMS  Research Training 
Grant 1646108.  E. Levina's research was partially supported by NSF DMS 
grants 1521551 and 1916222.  

\bibliographystyle{plain}
\bibliography{main}

\appendix
\section{Technical Results} \label{sec:multbeta}

\begin{lemma}
\label{lem:integral}
For every $0<a<b$ and $k \geq 1$,
\begin{equation*}
\int_a^b x^{k-1}\sqrt{(b-x)(x-a)}\,\diff x =
\frac{\pi}{2}\bigg(\frac{a+b}{2}\bigg)^{k+1}\sum_{j=0}^{\lfloor\frac{k-1}{2}\rfloor}\frac{1}{j+1}\binom{k-1}{2j} 
\binom{2j}{j}\bigg(\frac{b-a}{b+a}\bigg)^{2j+2}\frac{1}{2^{2j}}.
\end{equation*}
\end{lemma}

\begin{proof}
This identity is a well-known expression for the moments of the
Mar\v{c}enko-Pastur law, aside from changing the support of the distribution.
We include a proof for the sake of completeness.
The change of variables 
\[
u = \frac{2}{b-a}\bigg( x - \frac{a+b}{2}\bigg)
\]
makes the above integral equal to 
\[
\bigg(\frac{b-a}{2}\bigg)^2\int_{-1}^1\bigg\{\bigg(\frac{b-a}{2}\bigg) u + \bigg(\frac{a+b}{2}\bigg)\bigg\}^{k-1}  
\sqrt{1 - u^2}\,\diff u,
\]
and expanding, this is equal to
\[
\sum_{j=0}^{k-1}\binom{k-1}{j} 
\bigg(\frac{b-a}{2}\bigg)^{j+2}\bigg(\frac{a+b}{2}\bigg)^{k-1-j} \int_{-1}^1 u^j \sqrt{1-u^2}\diff u.
\]
Each odd $j$ vanishes by symmetry, yielding
\[
\sum_{j=0}^{\lfloor\frac{k-1}{2}\rfloor}\binom{k-1}{2j}
\bigg(\frac{b-a}{2}\bigg)^{2j+2}\bigg(\frac{a+b}{2}\bigg)^{k-1-2j}
\int_{-1}^1 u^{2j} \sqrt{1-u^2}\diff u.
\]
This integral with respect to $u$
is well-known and can be evaluated either through trigonometric substitution
$u=\sin\theta$ and repeatedly integrating by parts,
or by using evenness, changing variables to $u = \sqrt{v}$
and relating the resulting integral to the Beta function.
We obtain
\[
\int_{-1}^1 u^{2j}\sqrt{1-u^2}\diff u = 
\frac{\pi}{2^{2j+1}}\binom{2j}{j}\frac{1}{j+1}.
\]
Inserting this into the previous expression, the quantity of interest becomes
\[
\frac{\pi}{2}\sum_{j=0}^{\lfloor\frac{k-1}{2}\rfloor}\binom{k-1}{2j}
\bigg(\frac{b-a}{2}\bigg)^{2j+2}\bigg(\frac{a+b}{2}\bigg)^{k-1-2j}\binom{2j}{j}\frac{1}{j+1}\frac{1}{2^{2j}}.
\qedhere
\]
\end{proof}

The following are results from \citep{Kon88}.  Let $\Delta$ be a fixed
$p\times p$ positive definite matrix.

\begin{definition}[Multivariate Beta]
\label{def:multbeta}
A random $p \times p$ positive definite random matrix $L$ is said to
follow a multivariate Beta distribution with parameters $p$, $n_1$, $n_2$ and
$\Delta$ if its density obeys
\begin{equation}
\label{eqn:multbetadens}
\begin{split}
f_L(\ell)&:= K_{n_1,n_2} \frac{\det(\ell)^{\frac{n_1 -p -
      1}{2}}\det(\Delta - \ell)^{\frac{n_2 - p -
      1}{2}}}{\det(\Delta)^{\frac{(N - p - 1)}{2}} }, \quad 0
\preceq \ell \preceq \Delta,\\ K_{n_1,n_2}&:=
\frac{\Gamma_p\big(\frac{N}{2}\big)}{\Gamma_p\big(\frac{n_1}{2}\big)\Gamma_p\big(\frac{n_1}{2}\big)},\\
\Gamma_p(x) &:= \pi^{\frac{p(p-1)}{4}}\prod_{i=1}^p\Gamma\Big(x -
  \frac{i-1}{2}\Big),\\
N &:= n_1 + n_2,
\end{split}
\end{equation}
we denote this distribution as $B(p;n_1,n_2;\Delta)$.
\end{definition}

The following result appears as Corollary 3.3 (ii) in \citep{Kon88}.
We restate it here for ease of reference. 
\begin{theorem}
\label{thm:expofsquare}
Let $T$ be an arbitrary $p\times p$ deterministic matrix.  Suppose the
matrix $L$ is distributed as $B(p;n_1,n_2;\Delta)$ then
\begin{equation}
\BBE[LTL] = \frac{n_1}{N(N - 1)(N+2)} \big[\{ n_1(N+1) -2\} \Delta T
\Delta + n_2\{ (\Delta T \Delta)^\top + \Tr(\Delta T) \Delta \} \big],
\end{equation}
where $N = n_1 + n_2$.
\end{theorem}

\section{Strong Asymptotic Freeness for Real Wishart Matrices}
\label{sec:strongfree}

Let $X_i \in \BBR^{p\times n_i}$ with $1\leq i \leq N$ be a sequence
of random matrices with independent standard Gaussian entries and
define $W_i = X_i X_i^\top/n_i$ as a sequence of i.i.d Wishart
random matrices. Assume that $p/n_i \to \gamma_i > 0$ as
$p\to\infty$. We make use of free probability for what follows, for
an overview see \citep{MiSpe17}. Let $(\cA,\|\cdot\|_\cA,*, \phi)$ be a non-commutative $C^*$-probability space with faithful tracial state $\phi$ such that
there is a sequence of non-commutative freely independent free Poisson
random variables $\{\fPois_i\}_{1\leq i \leq N} \subset \cA$ whose
marginal distributions are
\[
\phi\big(\fPois_j^k\big) = \int_\BBR x^k
\mu_{\mathrm{MP},\gamma_j}(\diff x), \qquad 1 \leq j \leq N, \quad
k\geq 1,
\]
where $\mu_{\mathrm{MP},\gamma_j}$ is the Mar\v{c}enko-Pastur law with
parameter $\gamma_j$.
The space of non-commutative $*$-polynomials of $N$ variables are 
non-commutative polynomials with complex coefficients in variables 
$z_1$, $\ldots\,$, $z_N$ and $z_1^*$, $\ldots\,$, $z_N^*$. We wish to show 
that for any non-commutative $*$-polynomial $Q$ in we have the following 
almost sure convergence
\[
\lim_{n_i\to\infty} \| Q(W_1,\ldots, W_N) \| = \|Q(\fPois_1,\ldots,
\fPois_N)\|_{\cA},
\]
as discussed in the proof of Proposition~\ref{prop:harmlimit}, this is
the only result needed to extend the result of Proposition~\ref{prop:harmlimit} from complex Wishart random matrices to real Wishart random matrices, the
details of which are in \citep[Remark 3]{Lod19}.

This result will the follow from the recent result \citep[Theorem 3.2]{FSW20},
which we restate here. Suppose $\mathfrak{s}_1$, $\ldots\,$,
$\mathfrak{s}_N \in \cA$ are a sequence of freely independent free
semicircular elements, that is,
\[
\phi(\mathfrak{s}_j^k) = \int_\BBR \frac{x^k \sqrt{4-x^2}}{2\pi} \,\diff x,
\]
and $y_1,\ldots, y_q \in \cA$ are fixed with $\{\mathfrak{s}_i\}_{1\leq 
i \leq N}$ free from $\{y_i\}_{1\leq i \leq q}$ and $G_1,\ldots G_N$ 
are an i.i.d sequence of $M\times M$ GOE random matrices (real 
symmetric matrices with centered Gaussian entries with variance $1/M$ off 
the diagonal and $2/M$ on the diagonal) and let $Y_1,\ldots Y_q$ be a 
sequence of $M\times M$ deterministic matrices such that $\sup_{M\geq 
1}\sup_{j\leq q}\|Y_j\| \leq C$ for some constant $C>0$ and for any 
$*$-polynomial $P$ in $q$ non-commutative variables, we have
\begin{align*}
\lim_{M\to\infty}\frac{1}{M}\Tr P(Y_1,\ldots, Y_q) &= \phi\big[
  P(y_1,\ldots, y_q)\big],\\ \lim_{M\to\infty} \|P(Y_1,\ldots, Y_q)\|
&= \|P(y_1,\ldots, y_q)\|_\cA,
\end{align*}
then for any non-commutative polynomial $Q$ in $N+q$ variables, we have 
the almost sure convergence
\begin{align*}
\lim_{M\to\infty}\frac{1}{M}\Tr Q(G_1,\ldots,G_N,Y_1,\ldots Y_q) &=
\phi\big[ Q(\mathfrak{s}_1,\ldots, \mathfrak{s}_N, y_1,\ldots,
  y_q)\big],\\ \lim_{M\to\infty} \|Q(G_1,\ldots, G_N, Y_1,\ldots,
Y_q)\| &= \big\|Q(\mathfrak{s}_1,\ldots, \mathfrak{s}_N, y_1,\ldots,
y_q)\big\|_\cA.
\end{align*}

We now follow the method outlined in \citep[Section 9.2]{Ca12} to 
obtain the result we need from the result of \citep{FSW20} as follows. Let 
$\mathbf{0}_s$ be an $s\times s$ matrix of $0$'s and let 
$\mathbf{0}_{s,t}$ be an $s\times t$ matrix of $0$'s and let the 
parameter $M = p + \sum_{j=1}^N n_j$ and define the sequence of 
$M\times M$ block matrices
\begin{align*}
\BBe_0^{(M)} &= \begin{bmatrix}
\Id_{p} & \mathbf{0}_{p, M-p} \\
\mathbf{0}_{p, M-p} & \mathbf{0}_{M-p}
\end{bmatrix},\\
\BBe_j^{(M)} &= \begin{bmatrix}
\mathbf{0}_{p} &  &  & \\
               &\mathbf{0}_{\sum_{k=1}^{j-1} n_k} &  &\\
               &  & \Id_{n_j} & \\
               &  &  & \mathbf{0}_{\sum_{k=j+1}^N n_k} 
\end{bmatrix} \qquad 1 \leq j \leq N,
\end{align*}
where in the definition of $\BBe_{j}^{(M)}$, $1\leq j \leq N$,
the omitted entries are all 0. Let $G_1,\ldots, G_N$ be a sequence
of i.i.d $M\times M$ GOE matrices. Now consider the matrices
\[
\frac{1}{\sqrt{n_j}}\tilde{X_j} = \sqrt{\frac{M}{n_j}} e_0^{(M)} G_j e_j^{(M)},\qquad 1 \leq j \leq N,
\]
note that the only non-zero entries of the matrix $\tilde{X}_j$ is a 
$p\times n_j$ block matrix in the first $p$ rows and the columns from 
$p + 1 + \sum_{k=1}^{j-1} n_k $ to $p + \sum_{k=1}^j n_k$ and the 
distribution of this block matrix is identical to that of 
$\frac{1}{\sqrt{n_j}} X_j$. We now use the result of the previous 
paragraph to show convergence of $*$-polynomials of 
$\frac{1}{\sqrt{n_j}}\tilde{X_j}$ (with the matrices $\{\BBe_j^{(M)}\}_{1\leq j \leq N}$ playing the role of the deterministic matrices $Y_j$).
The remaining part of the proof of strong freeness of $W_1,\ldots, 
W_N$ follows from  block matrix manipulations, where the polynomials
of $\frac{1}{\sqrt{n_j}}\tilde{X_j}$ are used to produce polynomials in $W_j$. 
These calculations are essentially the same as those described in \citep[Lemmas 9.3--5]{Ca12} which did these manipulations for GUE (complex Hermitian Gaussian matrices) matrices instead of GOE matrices and proved the strong freeness for complex Wisharts as a Corollary to their main result \citep[Theorem 1.6]{Ca12}.
One additional adjustment is needed, since the results of \citep{Ca12} concern Wishart matrices of the form $p = rd$ and $n_j = s_j d$, where $d \to \infty$ and $r$ and $s_j$ are fixed positive integers.
These can be adapted to the present setting in the same way that we adjusted the block matrices $\{\BBe_j^{(M)}\}_{0\leq j \leq N}$ above, and the remaining arguments go through similarly with only cosmetic changes to the original proof.
Details are omitted for the sake of brevity.

\end{document}